\numberwithin{equation}{section}
\numberwithin{equation}{section}
\newtheorem{defi}{Definition}[section]
\newtheorem{theorem}[defi]{Theorem}
\newtheorem{lemma}[defi]{Lemma}
\newtheorem{proposition}[defi]{Proposition}
\newtheorem{remark}[defi]{Remark}
\newtheorem{remarks}[defi]{Remarks}
\newcommand{\be}{\begin{equation} \label}
\newcommand{\ee}{\end{equation}}
\newcommand{\bea}{\begin{eqnarray}\label}
\newcommand{\eea}{\end{eqnarray}}
\newcommand{\bas}{\begin{eqnarray*}}
\newcommand{\eas}{\end{eqnarray*}}
\newcommand{\bit}{\begin{itemize}}
\newcommand{\eit}{\end{itemize}}
\newcommand{\R}{\mathbb{R}}
\newcommand{\C}{\mathbb{C}}
\newcommand{\cA}{\mathcal{A}}
\newcommand{\E}{\mathbb{E}}
\newcommand{\eps}{\varepsilon}
\newcommand{\cE}{{\mathcal E}}
\newcommand{\cL}{{\mathcal L}}
\newcommand{\cB}{{\mathcal B}}
\newcommand{\ccor}{c_{\mbox{\tiny{cor}}}}
\newcommand{\Catm}{C_{\mbox{\tiny{atm}}}}
\newcommand{\Cocean}{C_{\mbox{\tiny{ocean}}}}
\newcommand{\Uatm}{U_{\mbox{\tiny{atm}}}}
\newcommand{\Uocean}{U_{\mbox{\tiny{ocean}}}}
\newcommand{\Ratm}{R_{\mbox{\tiny{atm}}}}
\newcommand{\Rocean}{R_{\mbox{\tiny{ocean}}}}
\newcommand{\ratm}{\rho_{\mbox{\tiny{atm}}}}
\newcommand{\rice}{\rho_{\mbox{\tiny{ice}}}}
\newcommand{\rocean}{\rho_{\mbox{\tiny{ocean}}}}
\newcommand{\tatm}{\tau_{\mbox{\tiny{atm}}}}
\newcommand{\tocean}{\tau_{\mbox{\tiny{ocean}}}}
\DeclareMathOperator*{\divergence}{div}
\DeclareMathOperator*{\trace}{tr}
\DeclareMathOperator{\Ret}{Re} 
\DeclareMathOperator{\Imt}{Im} 
\begin{document}

\title[Rigorous Analysis of  Hibler's Sea Ice Model]
{Rigorous Analysis and Dynamics of Hibler's Sea Ice Model}

\author{Felix Brandt}
\address{Technische Universit\"at Darmstadt\\
        Fachbereich Mathematik\\
        Schlossgartenstrasse 7\\
        64289 Darmstadt, Germany}
\email{felix.brandt@stud.tu-darmstadt.de}

\author{Karoline Disser}
\address{Universit\"at Kassel\\
        Institut f\"ur Mathematik \\
        Heinrich-Plett-Stra{\ss}e 40 \\
        34132 Kassel, Germany}
\email{kdisser@mathematik.uni-kassel.de}

\author{Robert Haller-Dintelmann}
\address{Technische Universit\"at Darmstadt\\
        Fachbereich Mathematik\\
        Schlossgartenstrasse 7\\
        64289 Darmstadt, Germany}
\email{haller@mathematik.tu-darmstadt.de}

\author{Matthias Hieber}
\address{Technische Universit\"at Darmstadt\\
        Fachbereich Mathematik\\
        Schlossgartenstrasse 7\\
        64289 Darmstadt, Germany}
\email{hieber@mathematik.tu-darmstadt.de}

\subjclass[2010]{35Q86, 35K59, 86A05, 86A10}
\keywords{Hibler's sea ice model, local and global well-posedness, viscous-plastic stress tensor, stability of equilibria}

\begin{abstract}
This article develops  for the first time a rigorous analysis of Hibler's model of sea ice dynamics. Identifying Hibler's ice stress as a quasilinear second order operator and regarding 
Hibler's model as a quasilinear evolution equation, it is shown  that Hibler's coupled sea ice model, i.e., the model coupling velocity, thickness and compactness of sea ice,  is locally 
strongly well-posed within the $L_q$-setting and also globally strongly well-posed for initial data close to constant  equilibria.         
\end{abstract}

\maketitle

\section{Introduction}\label{secintro}
Sea ice is a material with a complex mechanical and thermodynamical behaviour. Freezing sea water forms a composite of pure ice, liquid brine, air pockets and solid salt. The details 
of this formation depend on the laminar or turbulent environmental conditions, see e.g. \cite{Hib79}, \cite{Fel08} and \cite{Gol15}. This composite responds differently to heating, pressure or 
mechanical forces than for example the (salt-free) glacial ice of ice sheets.

The evolution of sea ice has attracted much attention in climate science due to its role as a hot spot in global warming. The state of the art concerning the  {\it modeling} of sea ice is described in the 
very recent survey article \cite{survey20} in the Notices of the American Mathematical Society.    

Somewhat surprisingly, the field of sea ice dynamics forms a terra incognita to rigorous mathematical analysis. In contrast to atmospheric or oceanic models, see e.g., the work of Lions, 
Temam, Wang \cite{LTW92a,LTW92b} and Cao and Titi \cite{CT07} for the primitive equations as well as the work of Majda \cite{Maj03} and Benacchio and Klein for atmospheric flows \cite{BK19}, 
rigorous analysis of sea ice models is essentially non-existent.    

The governing equations of large-scale sea ice dynamics that form the basis of virtually all sea ice models in climate science were suggested in a seminal  paper by Hibler \cite{Hib79} in 1979. Sea ice is 
here modeled as a material with a very specific constitutive law based on  viscous-plastic rheology. This model has been investigated numerically  by various communities
(see e.g.\@ \cite{Mehalle21,MK21,Meh19,MR17,SK18,DWT15,KDL15,BT14,LT09}), but it seems it never was studied from a rigorous analytical point of view. In fact, even the existence of 
weak solutions to Hibler's sea ice model seems to be unknown until today.

Moreover, fundamental 
questions in this respect such as thermodynamical consistency  of the Hibler model  with the second law of thermodynamics,  as well as existence, uniqueness and regularity properties of 
solutions of this sea ice PDE system  seem to be open problems. Under distinct simplifications, certain submodels were, however, considered  in 
\cite{Gra99} and \cite{GLS13} within the context of hyperbolic systems. The authors postulate  ill-posedness of these simplified submodels. 

As stated above,  Hibler's sea ice model was already investigated numerically by many authors.   
All of these approaches are based on various regularizations of the underlying ice stress tensor. For example, the original viscous-plastic equations have been regularized by means of 
additional artificial elasticity, see  e.g., the work of Hunke-Dukowicz \cite{HD97}, in order to improve computational efficiency. This elastic-viscous-plastic approach has been 
implemented then in many sea ice and 
climate models. Let us emphasize that simulations of sea ice behaviour show a distinct discrepancy whether the original Hibler or the regularized sea ice PDEs are being used, see \cite{LD12}. For a thorough 
numerical study based on  a regularization of the orginal viscous-plastic ice stress, we refer to the work of Mehlmann, Richter and Korn \cite{MK21,MR17,Meh19}.     
 
In this article for the first time we rigorously prove the existence and uniqueness of a strong solution to   Hibler's sea ice model. Our approach is based on the theory of   
quasilinear parabolic evolution equations and a regularization of Hibler's original ice stress $\sigma$. This regularization has been used already in various numerical approaches by 
Mehlmann, Richter and Korn, see  \cite{Meh19}, \cite{MR17}, \cite{MK21}.  

A key point of our analysis is the understanding of the term $\divergence \sigma$ as a strongly  elliptic quasilinear operator $A$ within the $L_q$-setting.
We show that its linearization,  subject to  Dirichlet boundary conditions, fulfills the Lopatinksii-Shapiro condition yielding the maximal $L_q$-regularity property of the linearized Hibler operator. 
The latter property is then extended to the coupled system, described precisely  later on in \eqref{eq:cs},  consisting of the  momentum equation for the velocity $u$ and the two balance laws for the 
mean ice thickness $h$ and the ice compactness $a$. Regarding this model as a quasilinear evolution equation, we obtain strong well-posedness of the fully coupled system. For background information on 
linear and quasilinear evolution equations we refer e.g. to \cite{ABHN11,DHP03,DDHPV04,KW04,DHP07,PS16,HRS20}.   

In our first main result we prove the existence and uniqueness of a local {\em strong solution} to \eqref{eq:cs} for suitably chosen  initial data and show that this solution depends continuously on the data, 
exists on a maximal time interval and regularizes instantly in time. Secondly, we show that this solution extends uniquely to a global strong solution provided the initial data are close
to the equilibria $v_* = (0,h_*,a_*)$, where $h_*$ and $a_*$ denote constants and the external forces vanish. 

To put our result in perspective, note that  the existence of a  weak solution to 
Hibler's sea ice model \eqref{eq:cs} is  {\em not} known until today. It is also interesting to observe that Hibler's sea ice stress tensor is related to the stress tensor of certain non-Newtonian 
fluids, as described e.g.\@ in \cite{BP07}. It was shown recently by Burczak, Modena and Szekelyhidi in \cite{BMS20} that under certain assumptions weak solutions to these non-Newtonian fluid models 
are highly non unique.

This article is organized as follows: Section 2 presents Hibler's model as well as  our  main well-posedness results for this system. In Section 3 we rewrite Hibler's operator as a second order 
quasilinear operator, whose linearization will be investigated in Section 4. There we show that the linearization of Hibler's operator is a strongly elliptic operator within the $L_q$-setting and that 
this operator subject to Dirichlet boundary conditions satisfies the maximal $L_q$-regularity property. After a short section on functional analytic properties of Hibler's operator, in Section 6  we present 
the proof of our local well-posedness  result. Finally, the proof of our global well-posedness  result is given in Section 7.

\section{Hibler's Viscous-Plastic Sea Ice Model and Main Results}\label{sechiblerseaicemodel}
In 1979, W.D. Hibler \cite{Hib79} proposed a rheology model for sea ice dynamics, which has become since then the  standard sea ice dynamics model and serves until today as a  basis for 
many numerical studies in this field. Roughly speaking, pack ice consists of rigid plates which drift freely in open water or are closely packed together in areas of high ice concentration. 
Although individual ice floes may have very different sizes, pack ice may be considered as  a highly fractured two-dimensional continuum. 

The momentum balance in this model is given by the  two-dimensional  equation     
\begin{align}\label{eqmomentumbalance}
    m(\dot{u} + u \cdot \nabla u) = \divergence \sigma - m \ccor n \times u - m g \nabla H + \tatm + \tocean, 
\end{align}     
where ${u: (0,\infty) \times \R^2 \to \R^2}$ denotes the horizontal ice velocity and  $m$ the ice mass per unit area.  
Moreover, ${- m \ccor \, n \times u}$ represents the Coriolis force with Coriolis parameter $\ccor > 0$ and unit vector ${n : \R^2 \to \R^3}$ normal to the surface, while 
$- m g \nabla H$ describes the force arising from changing sea surface tilt with  sea surface dynamic height ${H: (0,\infty) \times \R^2 \to [0,\infty)}$ and gravity $g$.
The terms $\tatm$ and $\tocean$ describe  atmospheric wind and oceanic forces given by
\begin{align}
\tatm  &= \ratm \Catm \vert \Uatm \vert \Ratm \Uatm, \label{def:tau1}\\
\tocean &= \rocean \Cocean \vert \Uocean - u \vert \Rocean (\Uocean - u), \label{def:tau2}
\end{align}
where $\Uatm$ and $\Uocean$ denote the surface winds of the atmosphere and the surface current of the ocean, respectively.  Furthermore, $\Catm$ and $\Cocean$ are air and ocean drag 
coefficients, $\ratm$ and $\rocean$ denote the densities for air and sea water and $\Ratm$ and $\Rocean$ are rotation matrices acting on wind and current vectors. For results on  
fluids driven by wind forces, we refer to \cite{BS01}.

Following Hibler \cite{Hib79}, the viscous-plastic rheology is given by a constitutive law that relates the internal ice stress $\sigma$ and the deformation tensor  
${\eps = \eps(u) = \frac{1}{2}(\nabla u + \nabla u ^T)}$ through an internal ice pressure $P $ and nonlinear bulk and shear viscosities, $\zeta$ and $\eta$, 
such that the principal  components of the stress lie on an elliptical yield curve  with the ratio of major to minor axes $e$. This constitutive  law is given by 
 \begin{align}\label{eq:sigma}
    \sigma & = 2 \eta(\eps,P) \eps + [\zeta(\eps,P) - \eta(\eps,P)]\trace(\eps)I - \frac{P}{2}I. 
\end{align}
The pressure $P$ measures the ice strength, depending on the thickness $h$ and the ratio $a$ of thick ice per unit area, and is explicitly given by 
\begin{equation}\label{def:P}
P=P(h,a)=p^\ast h \exp{(-c(1-a))},
\end{equation}
where $p^\ast>0$ and $c>0$ are given constants. The bulk and shear viscosities $\zeta$ and $\eta$ increase with pressure and decreasing deformation tensor and are given by 
$$
\zeta(\eps,P)=\frac{P}{2 \triangle(\eps)}  \quad \mbox{and}  \quad \eta(\eps,P)=e^{-2} \zeta(\eps,P), 
$$
where 
$$
 \triangle^2(\eps) := \Bigl(\eps_{11}^2 + \eps_{22}^2\Bigr)\Bigl(1+\frac{1}{e^2}\Bigr) + \frac{4}{e^2} \eps_{12}^2 + 2 \eps_{11} \eps_{22}\Bigl(1- \frac{1}{e^2}\Bigr), 
$$
and $e$ as described above is the ratio of the long axis to the short axis of the elliptical  yield curve. 
The above law represents an idealized viscous-plastic material, whose viscosities, however, become singular if $\triangle$ tends to zero. 

For this reason, already Hibler proposed to regularize this behaviour  by bounding the viscosities when $\triangle$ is getting small and by defining maximum values $\zeta_{\mbox{\tiny max}}$ and 
$\eta_{\mbox{\tiny max}}$ for 
$\zeta$ and $\eta$. Then $\zeta$ and $\eta$ become 
$$
\zeta' = \min\{\zeta,\zeta_{\mbox{\tiny max}}\} \quad \mbox{and} \quad  \eta'= \min\{\eta,\eta_{\mbox{\tiny max}}\}.
$$ 
This formulation of the viscosities leads, however, to non smooth rheology terms. To enforce smoothness, several regularizations have been considered in the literature, see e.g. \cite{MR17}, 
\cite{LT09}. For example, Lemieux and Tremblay \cite{LT09} replaced $\zeta$ by $\zeta= \zeta_{\mbox{\tiny max}} \tanh (P/(2\triangle\zeta_{\mbox{\tiny max}})$. 

An elastic-viscous-plastic stress tensor was introduced  by Hunke and 
Dukowicz in \cite{HD97}. Starting from the observation that the relation \eqref{eq:sigma} for $\sigma$ can be rewritten as 
$\frac{1}{2\eta}\sigma + \frac{\eta-\zeta}{4 \eta\zeta} \trace\sigma + \frac{P}{4\zeta}I = \eps$, they proposed the relation 
$$
\frac{1}{E}\partial_t \sigma + \frac{1}{2\eta}\sigma + \frac{\eta-\zeta}{4 \eta\zeta} \trace\sigma + \frac{P}{4\zeta}I = \eps.
$$ 
Note hat the relation \eqref{eq:sigma} is obtained in the limit $E \to \infty$,  while for $\zeta,\eta \to \infty$ one recovers the elasticity equation  $\frac{1}{E}\partial_t \sigma = \eps$.  In 
the pure plastic case, the compressive stress $\sigma_d = \trace \sigma$ and the shear stress $\sigma_s= ( (\sigma_{11}-\sigma_{22})^2 + 4 \sigma_{12}^2)^{1/2}$ are linked by the relation  
$(\sigma_d + P)^2 + e^2\sigma_s^2 = P^2$, which leads to  an elliptical yield curve.

Following \cite{MK21}, see also \cite{KHLFG00}, we consider for $\delta>0$ the regularization  
$$
{\triangle_\delta (\eps) := \sqrt{\delta + \triangle^2 (\eps)}}.
$$
We then set $\zeta_\delta=\frac{P}{2 \triangle_\delta(\eps)}$ and $\eta_\delta=e^{-2} \zeta_\delta$ as well as 
 \begin{align}\label{defsigmadelta}
    \sigma_\delta := 2 \eta_\delta \eps + [\zeta_\delta - \eta_\delta]\trace(\eps)I - \frac{P}{2}I. 
\end{align} 
We consider in the following the above momentum equation \eqref{eqmomentumbalance}  in a bounded domain $\Omega \subset \R^2$ with boundary of class $C^2$. It is coupled to two balance equations for the 
mean ice thickness 
\begin{equation}\label{eq:hkappa}
h: J \times \Omega  \to [\kappa,\infty) \quad \mbox{for some} \quad \kappa >0, 
\end{equation}
and the ice compactness $a: J \times \Omega  \to \R$ with $a \geq \alpha$ for some $\alpha \in \R$ given by 
\begin{equation*}\label{eq:hA}
  \left\{ \begin{array}{ll}
	\dot{h} + \divergence(u h) &=S_h + d_h \Delta h, \\[2mm]
	\dot{a} + \divergence(u a) &=S_a + d_a \Delta a.  \\[2mm]
	\end{array} \right.
\end{equation*} 
Here, $\kappa$ is a small parameter that indicates the transition to open water in the sense that for $m = \rice h$  a value of $h(t,x,y)$ less than $\kappa$ means 
that at $(x,y) \in \Omega$ and at time $t$ there is open water. Furthermore, let $J = (0,T)$ for $0<T\leq \infty$, let $\Delta$ be the Laplacian, $d_h>0$ and $d_a>0$ be constants and for 
$f \in C^1([0,\infty);\R)$ define the terms  $S_h$ and $S_a$ by 
 \begin{align}
S_h &=  f \big(\frac{h}{a}\big) a + (1-a)f(0) \label{def:sh}, \\
S_a &= \begin{cases*} \frac{f(0)}{\kappa}(1-a),& if $f(0) > 0$ \\ 0, \quad & if $f(0) < 0$ 
    \end{cases*}
    \quad + \quad \begin{cases*} 0,& if $S_h > 0$, \\ \frac{a}{2 h}S_h, & if $S_h < 0$.
    \end{cases*} \label{def:sa}
\end{align}
The system is finally completed by Dirichlet boundary conditions for $u$ 
and Neumann boundary conditions for $h$ and $a$.   

Given $0<T\leq \infty$ and $J= (0,T)$,  the complete set of equations describing sea ice dynamics by Hibler's model then reads as  
\begin{equation}\label{eq:cs}
  \left\{ \begin{array}{rll}
	m(\dot{u} + u \cdot \nabla u) & = \divergence\sigma_\delta - m \ccor n \times u - m g \nabla H + \tatm + \tocean, & \; x\in\Omega, \ t\in J, \\[2mm]
	\dot{h} + \divergence(u h) & =S_h + d_h \Delta h, & \; x\in\Omega, \ t \in J, \\[2mm]
	\dot{a} + \divergence(u a) & =S_a + d_a \Delta a, & \; x\in\Omega, \ t \in J, \\[2mm]
	u=\frac{\partial h}{\partial\nu} & = \frac{\partial a}{\partial\nu} = 0, & \; x\in\partial\Omega, \ t\in J, \\[2mm]
	u(0,x)&=u_0(x), \quad h(0,x)=h_0(x), \quad a(0,x)=a_0(x), & \; x\in\Omega.
	\end{array} \right.
\end{equation}
Note that $m = \rice h$ and $h$ is subject to \eqref{eq:hkappa}. 

To formulate our main well-posedness result for the system \eqref{eq:cs}, we first rewrite it as a quasilinear evolution equation and introduce a setting as follows. Denoting  the 
principle variable
of the system by $v=(u,h,a)$, we rewrite system \eqref{eq:cs} as a quasilinear evolution equation of the form 
\begin{equation}\label{eq:aqle}
v' + A(v)v = F(v), \quad t>0, \quad v(0)=v_0.
\end{equation}
Here $v$ belongs to the ground space $X_0$ defined by 
$$
X_0 = L_q(\Omega;\R^2) \times L_q(\Omega) \times L_q(\Omega),
$$
where $1<q<\infty$. The regularity space will be
$$
X_1 = \{u \in H^{2}_q(\Omega;\R^2): u=0 \mbox{ on } \partial\Omega\} \times   \{h \in H^{2}_q(\Omega): \partial_\nu h =0 \mbox{ on } \partial\Omega\} \times  
\{a \in H^{2}_q(\Omega): \partial_\nu a =0 \mbox{ on } \partial\Omega\}.
$$  
Furthermore, the quasilinear operator $A(v)$ is given by the upper triangular matrix
\begin{align}\label{eq:A(v)}
A(v) &=
    \begin{pmatrix}
    \frac{1}{\rho_{\text{ice}} h} A^H_D (\nabla u,P(h,a)) & \frac{\partial_{h} P(h,a)}{2 \rho_{\text{ice}} h}\nabla & \frac{\partial_{a} P(h,a)}{2 \rho_{\text{ice}} h}\nabla \\
    0 & - d_h \Delta_N & 0\\
    0 & 0 & - d_a \Delta_N
    \end{pmatrix}.
\end{align}
Here $A^H_D$ denotes the realization of Hibler's operator subject to Dirichlet boundary conditions on $L_q(\Omega;\R^2)$, introduced and defined precisely in Section~\ref{sec:3}, and $\Delta_N$ the 
Neumann Laplacian on $L_q(\Omega)$ defined by $\Delta_N = \Delta$ with $D(\Delta_N) = \{h \in H^{2}_q(\Omega): \partial_\nu h =0 \mbox{ on } \partial\Omega\}$. 
The semilinear part $F(v)$ is defined by 
\begin{align}\label{eq:F(v)}
 F(v)
    & = \begin{pmatrix}
   - u \cdot \nabla u - \ccor n \times u -  g \nabla H + \frac{c_1}{h}|\Uatm|\Uatm +  \frac{c_2}{h}|\Uocean-u|(\Uocean -u) \\ 
    - \divergence(u h) + S_h \\ 
     - \divergence(u a) + S_a
    \end{pmatrix},
\end{align}
where $c_1 = \ratm\Catm\Ratm \rice^{-1}$ and  $c_2 = \rocean\Cocean\Rocean\rice^{-1}$ and $\Uatm$ and $\Uocean$ are given functions. 

We consider solutions $v$ within the class
$$
v\in H^1_{p,\mu}(J;X_0)\cap L_{p,\mu}(J;X_1) =: \E_1(J),
$$
where $J=(0,T)$ as above is an interval and $\mu\in (1/p,1]$ indicates a time weight. More precisely,
$$
v\in H^{k}_{p,\mu}(X_l) \quad \Leftrightarrow \quad t^{1-\mu} v\in H^k_p(X_l),\quad k,l=0,1.
$$
The time trace space of this class is given by
\begin{equation}
X_{\gamma,\mu} = (X_0,X_1)_{\mu-1/p,p} = D_{A^H_D(v_0)}(\mu-1/p,p) \times D_{\Delta_N}(\mu-1/p,p)  \times D_{\Delta_N}(\mu-1/p,p)
\end{equation}
provided $p \in (1,\infty)$ and $\mu \in (1/p,1]$. Note that (see e.g., Section 7 of  \cite{AF03}) 
\begin{align}\label{eq:besov}
X_{\gamma,\mu}\hookrightarrow B_{qp}^{2(\mu-1/p)}(\Omega)^{4} \hookrightarrow C^1(\overline{\Omega})^{4}
\end{align}
provided
\begin{equation}\label{eq:pq}
\frac12 + \frac{1}{p} +\frac{1}{q}<\mu\leq 1.
\end{equation}
It is well-known that for $\mu$ satisfying \eqref{eq:pq} the above real interpolation spaces can be characterized  as
\begin{align*}
u \in D_{A^H_D(v_0)}(\mu-1/p,p) &\Leftrightarrow u \in B^{2\mu-2/p}_{qp}(\Omega)^2, u = 0 \mbox{ on } \partial \Omega,\\ 
h \in D_{\Delta_N}(\mu-1/p,p) &\Leftrightarrow h \in B^{2\mu-2/p}_{qp}(\Omega), \partial_\nu h =0 \mbox{ on } \partial \Omega. \\
\end{align*} 
For brevity we set $X_\gamma:=X_{\gamma,1}$. Moreover, let $V_\mu$ be an open subset of $X_{\gamma,\mu}$ such that all 
\begin{equation}\label{def:vmu}
(u,h,a) \in V_\mu  \; \mbox{ satisfy } \;  h \geq \kappa \; \mbox{ for some } \;  \kappa >0 \, \mbox{ and } \, a \in [0,1].     
\end{equation}

\begin{theorem}{\rm (Well-Posedness of Hibler's sea ice model).} \label{thm:local} \\
Let $\Omega \subset \R^2$ be a bounded domain with boundary of class $C^{2}$ and for $\delta>0$ let $\sigma_\delta$ be defined as in \eqref{defsigmadelta}. 
Assume that $1<p,q <\infty$ and that  $\mu \in (1/p,1]$ are subject to \eqref{eq:pq} and  
let $v_0  \in V_\mu$, where $V_\mu$ is as in \eqref{def:vmu}.  

a) Then there exist $\tau=\tau(v_0)>0$ and $r = r(v_0)>0$  
with $\overline{B}_{X_{\gamma,\mu}}(v_0,r) \subset V_\mu$ such that  equation \eqref{eq:aqle}, i.e.,  equations \eqref{eq:cs},  \eqref{def:tau1}, \eqref{def:tau2}, \eqref{def:P}, \eqref{def:sh} and 
\eqref{def:sa},  has a unique solution
$$
v(\cdot,v_1)\in H^1_{p,\mu}(0,\tau;X_0)\cap L_{p,\mu}(0,\tau;X_1) \cap C([0,\tau);V_\mu)
$$
for each initial value $v_1 \in \overline{B}_{X_{\gamma,\mu}}(v_0,r)$. Moreover, there exists $C=C(v_0)$ such that
$$
\Vert v(\cdot,v_1) - v(\cdot,v_2)\Vert_{\E_1(0,\tau)} \leq C \Vert v_1-v_2\Vert_{X_{\gamma,\mu}}
$$
for all $v_1,v_2 \in \overline{B}_{X_{\gamma,\mu}}(v_0,r)$. 
In addition,  
\begin{align*}
t\partial_t v \in  H^1_{p,\mu}(0,\tau;X_0)\cap L_{p,\mu}((0,\tau);X_1), 
\end{align*}
i.e.,\ the solution regularizes instantly in time. In particular,  
$$ 
v \in C^1([b,\tau];X_{\gamma,1})  \cap C^{1-1/p}([b,\tau];X_1) 
$$
for any $b \in (0,\tau)$.

b) The solution $v= v(v_0)$  exists on a maximal time interval $J(v_0) = [0,t^+(v_0))$, which is characterized by the following alternatives:
\begin{itemize}
\item[i)] global existence, i.e., $t_+(v_0) = \infty$,
\item[ii)] $\lim_{t \to t_+(v_0)} \text{dist}_{X_{\gamma,\mu}} \big(v(t),\partial V_\mu\big) =0$,
\item[iii)] $\lim_{t \to t_+(v_0)} v(t)$ does not exist in $X_{\gamma,\mu}$.  
\end{itemize}
\end{theorem}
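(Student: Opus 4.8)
The strategy is to read \eqref{eq:aqle} as a quasilinear parabolic evolution equation and to invoke the abstract local well-posedness theory in time-weighted maximal regularity spaces developed by Pr\"uss and Simonett \cite{PS16} (see also \cite{DDHPV04,KW04}). That theory reduces the assertions of the theorem to two structural hypotheses: (i) the frozen-coefficient operator $A(v_0)$, considered as an unbounded operator on $X_0$ with domain $X_1$, has maximal $L_p$-regularity (equivalently, is $\cR$-sectorial with angle $<\pi/2$); and (ii) the maps $A\colon V_\mu \to \cL(X_1,X_0)$ and $F\colon V_\mu \to X_0$ are locally Lipschitz, in fact $C^1$. Granting (i) and (ii), the abstract theorem directly yields the unique solution in $\E_1(0,\tau)$, the Lipschitz dependence on the initial value, the instantaneous smoothing $t\partial_t v \in \E_1$, and---via its continuation part---the maximal interval together with the blow-up alternatives of part~b).

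To verify (i), I would exploit that $A(v_0)$ is upper triangular, so that its maximal $L_p$-regularity is equivalent to that of the diagonal blocks, the two off-diagonal first-order operators $\frac{\partial_h P(h_0,a_0)}{2\rice h_0}\nabla$ and $\frac{\partial_a P(h_0,a_0)}{2\rice h_0}\nabla$ being lower-order and thus admissible perturbations. The blocks $-d_h\Delta_N$ and $-d_a\Delta_N$ are positive multiples of the Neumann Laplacian and have maximal $L_p$-regularity by classical results. For the upper-left block, the realization $A^H_D$ of Hibler's operator subject to Dirichlet conditions (introduced in Section~\ref{sec:3}) has maximal $L_p$-regularity by the Lopatinskii--Shapiro analysis announced in the introduction; since $h_0 \geq \kappa$ by \eqref{def:vmu}, the scalar factor $1/(\rice h_0)$ is strictly positive, bounded and bounded away from zero, and multiplication by it preserves maximal regularity.

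To verify (ii), the decisive ingredient is the embedding $X_{\gamma,\mu}\hookrightarrow C^1(\overline{\Omega})^4$ from \eqref{eq:besov}, valid under the standing assumption \eqref{eq:pq}, which controls $u,\nabla u,h,a$ pointwise. The regularization $\delta>0$ renders $\triangle_\delta(\eps)=\sqrt{\delta+\triangle^2(\eps)}$ bounded below and smooth in $\nabla u$, so that $\zeta_\delta,\eta_\delta$, and hence the principal coefficients of $A^H_D(\nabla u,P(h,a))$, depend smoothly on $(\nabla u,h,a)$; combined with $h\geq\kappa$ and $a\in[0,1]$ this gives $v\mapsto A(v)\in C^1(V_\mu,\cL(X_1,X_0))$. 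For $F$, the convective term $u\cdot\nabla u$, the divergence terms $\divergence(uh)$ and $\divergence(ua)$, and the atmospheric and oceanic drag terms are locally Lipschitz into $X_0$ by the $C^1$-embedding together with product estimates and the lower bound $h\geq\kappa$; the sources $S_h,S_a$ are locally Lipschitz in $(h,a)$ because $f\in C^1$ and the case distinctions in \eqref{def:sh} and \eqref{def:sa} are constructed to glue continuously across $f(0)=0$ and $S_h=0$.

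With (i) and (ii) in hand, part~a) follows by applying the abstract quasilinear result of \cite{PS16}, the H\"older regularity on compact subintervals $[b,\tau]$ being a consequence of the smoothing property $t\partial_t v\in\E_1$ and the embedding properties of the maximal regularity class; part~b) is its standard continuation and maximality statement. I expect the \emph{main obstacle} to lie in hypothesis (ii), specifically in the $C^1$-dependence of the quasilinear operator on $v$ through Hibler's nonlinear viscosities: one has to differentiate the composite map $\nabla u \mapsto \triangle_\delta(\eps)\mapsto(\zeta_\delta,\eta_\delta)$ and verify that the induced perturbations of the principal coefficients remain bounded operators from $X_1$ to $X_0$ depending locally Lipschitz continuously on $v$---precisely the point at which both the regularization $\delta>0$ and the embedding into $C^1(\overline{\Omega})$ are indispensable.
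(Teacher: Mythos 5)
Your proposal follows essentially the same route as the paper: the authors likewise reduce Theorem~\ref{thm:local} to the abstract quasilinear result \cite[Thm.~5.1.1]{PS16} by establishing (Lemma~\ref{lem:maxreg}) maximal $L_p$-regularity of the frozen-coefficient operator via its upper triangular structure, the Lopatinskii--Shapiro analysis of $A^H_D(v_0)$ from Section~\ref{sec:4} and the Neumann Laplacian blocks, and (Lemma~\ref{lemma:lipschitz}) local Lipschitz continuity of $(A,F)$ using the embedding $X_{\gamma,\mu}\hookrightarrow C^1(\overline{\Omega})^4$, the lower bound $h\geq\kappa$ and the regularization $\delta>0$. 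The only cosmetic difference is that the paper works with the shifted pair $(A_\omega,F_\omega)$ to align with the shift required in Theorem~\ref{thm:mr}; your identification of the Lipschitz dependence of the principal coefficients on $\nabla u$ through $\triangle_\delta$ as the key technical point matches the paper's treatment.
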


\begin{remarks}{\rm
a) Assuming $1<p,q<\infty$ and  $\mu \in (1/p,1]$ subject to \eqref{eq:pq},  the smoothness condition required for  the initial data $v_0=(u_0,h_0,a_0)$ in Theorem \ref{thm:local} can be characterized as 
\begin{align*}
u_0 \in B^{2\mu-2/p}_{qp}(\Omega)^2, u_0 = 0 \mbox{ on } \partial \Omega, \;
h_0 \in B^{2\mu-2/p}_{qp}(\Omega), \partial_\nu h_0 =0 \mbox{ on } \partial \Omega, \; 
a_0 \in B^{2\mu-2/p}_{qp}(\Omega), \partial_\nu a_0 =0 \mbox{ on } \partial \Omega. 
\end{align*} 
b) These conditions are  in particular satisfied if $(u_0, h_0, a_0) \in H^{1+2/q+s}_q(\Omega)^4$ for some $s>0$ satisfy the above boundary conditions.  

}
\end{remarks}

Assuming  that $h_*$ and $a_*$ are constant in time and space, we observe that  $(0,h_\ast,a_\ast)$ are trivial equilibria for equation \eqref{eq:aqle} subject to vanishing forcing terms.   
We proceed by showing  that  the equilibrium $(0,h_\ast,a_\ast)$ is stable in $X_{\gamma,\mu}$ and the unique solution of \eqref{eq:aqle} exists globally for initial data close to the 
aforementioned equilibrium provided $\delta$ is chosen small enough and the external forces vanish. 

\begin{theorem}\label{thm:global}
There exists $\delta^* > 0$ such that for all $\delta \in (0,\delta^*)$ and for $h_\ast$ and $a_\ast$ as above, the equilibrium $v_\ast = (0, h_\ast, a_\ast)$ is stable in $X_{\gamma,\mu}$ and there 
exists $r > 0$ such that the unique solution $v$ of \eqref{eq:aqle} without forcing terms and with initial value $v_0 \in X_{\gamma,\mu}$ fulfilling $\Vert v_0 - v_\ast \|_{X_{\gamma,\mu}} < r$ exists 
on $\R_+$ and converges at an exponential rate in $X_{\gamma,\mu}$ to some equilibrium $v_\infty$ of \eqref{eq:aqle} as $t \to \infty$.
\end{theorem}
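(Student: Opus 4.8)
The plan is to derive Theorem~\ref{thm:global} from the generalized principle of linearized stability for quasilinear parabolic problems in the form of Pr\"uss and Simonett \cite{PS16}. Writing the forcing-free version of \eqref{eq:aqle} as $v' + G(v) = 0$ with $G(v) := A(v)v - F(v)$, it suffices to prove that $v_* = (0,h_*,a_*)$ is \emph{normally stable}, i.e.\ that near $v_*$ the equilibria form a $C^1$-manifold $\mathcal{E}$ with $T_{v_*}\mathcal{E} = N(A_0)$ for the linearization $A_0 := G'(v_*)$, that $\lambda = 0$ is a semisimple eigenvalue of $A_0$, and that $\sigma(A_0)\setminus\{0\} \subset \{\Re\lambda > 0\}$. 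Granting these, the abstract theorem produces an $r>0$ with the asserted global existence, Lyapunov stability of $v_*$, and exponential convergence in $X_{\gamma,\mu}$ of each nearby solution to some $v_\infty \in \mathcal{E}$. The $C^1$-dependence of $A$ and $F$ and the maximal $L_p$-regularity of $A_0$ required to apply the principle are inherited from the local theory and from the linear maximal-regularity results of Section~\ref{sec:3} and the following sections, since $A_0$ differs from its strongly elliptic diagonal part only by lower-order terms.

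First I would make the equilibrium structure explicit. At $v_*$ one has $\eps(0)=0$, hence $\triangle_\delta = \sqrt\delta$ and Hibler's stress collapses to $\sigma_\delta = -\tfrac{P_*}{2}I$ with $P_* := P(h_*,a_*)$ constant; thus $\divergence\sigma_\delta = -\tfrac12\nabla P_* = 0$ and, together with $u_*=0$ and the constancy of $h_*,a_*$, all three equations hold. Because the trivial equilibria fill a two-parameter family of constant states, the balance-law sources vanish identically on it, so $S_h$, $S_a$ and their $(h,a)$-derivatives vanish at $v_*$; consequently the linearization of \eqref{eq:A(v)}--\eqref{eq:F(v)} carries \emph{no} zeroth-order source contribution. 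Using $u_*=0$ and $\nabla h_* = \nabla a_* = 0$, the operator $A_0$ equals the diagonal operator $\diag\big(\tfrac{1}{\rice h_*}A^H_D(0,P_*),\,-d_h\Delta_N,\,-d_a\Delta_N\big)$ perturbed by the pressure-gradient couplings $\tfrac{\partial_h P}{2\rice h_*}\nabla$ and $\tfrac{\partial_a P}{2\rice h_*}\nabla$ feeding $(h,a)$ into the velocity block, the divergence couplings $h_*\divergence$ and $a_*\divergence$ feeding the velocity into the $(h,a)$-blocks, and the skew-symmetric Coriolis term $w_u \mapsto \ccor\, n\times w_u$.

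The core of the proof, and the step I expect to be hardest, is the verification of the spectral conditions for $A_0$. For the diagonal part they are immediate: $A^H_D(0,P_*)$ is strongly elliptic and, under Dirichlet conditions, coercive with trivial kernel, so its spectrum lies in $\{\Re\lambda>0\}$, while each Neumann Laplacian has spectrum in $\{0\}\cup[\lambda_1,\infty)$ for some $\lambda_1>0$ with kernel equal to the constants; thus the uncoupled kernel is $\{0\}\times\mathbb{R}\times\mathbb{R} = T_{v_*}\mathcal{E}$, the eigenvalue $0$ is semisimple, and a spectral gap is present. It remains to show that the couplings neither shift spectrum across the imaginary axis nor enlarge the kernel, and this is exactly where small $\delta$ enters. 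At $\eps=0$ the viscosities $\zeta_\delta,\eta_\delta$, and hence the coercivity constant of $A^H_D(0,P_*)$, grow like $\delta^{-1/2}$, so the velocity block has a spectral bound tending to $+\infty$ as $\delta\to0$ while the couplings remain bounded. A Schur-complement (equivalently, Neumann-series resolvent) argument then slaves the velocity to $(h,a)$ with gain $O(\delta^{1/2})$, reducing the spectral problem to a vanishing perturbation of $\sigma(-d_h\Delta_N)\cup\sigma(-d_a\Delta_N)$; choosing $\delta^*$ small enough keeps $\sigma(A_0)\setminus\{0\}$ inside $\{\Re\lambda>0\}$, preserves the semisimplicity of $0$, and forces $N(A_0)=T_{v_*}\mathcal{E}$, the Coriolis term being harmless since it only contributes an imaginary shift to the already far-right velocity spectrum. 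With $v_*$ thus shown to be normally stable, the generalized principle of linearized stability of \cite{PS16} yields all conclusions of Theorem~\ref{thm:global}.
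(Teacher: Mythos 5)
Your overall strategy coincides with the paper's: both proofs invoke the generalized principle of linearized stability of Pr\"uss--Simonett(--Zacher), compute the same linearization $A_0 v = A(v_\ast)v + (A'(v_\ast)v)v_\ast - F'(v_\ast)v$ (with $(A'(v_\ast)v)v_\ast = 0$), and exploit the same key mechanism, namely that at $\eps=0$ the coercivity constant of $A^H_D(v_\ast)$ scales like $\delta^{-1/2}$ while the pressure-gradient and divergence couplings stay $O(1)$, so that choosing $\delta^\ast$ small pushes $\sigma(A_0)\setminus\{0\}$ into the open right half plane and pins down $N(A_0)$ as the constants in $(h,a)$. Where you differ technically is in how this domination is implemented: you propose a Schur-complement/Neumann-series resolvent argument slaving $u$ to $(h,a)$ with gain $O(\delta^{1/2})$, whereas the paper (Lemma \ref{lemma:7.1}) tests the eigenvalue equation $(\lambda+A_0)v=0$ in $L_2$, uses Korn and Poincar\'e to get $\int_\Omega A^H_D(v_\ast)u\cdot u \ge C_\ast\delta^{-1/2}\Vert u\Vert_{H^1}^2$, and absorbs the cross terms by Young's inequality under the condition $\gamma_h+\gamma_a < C_\ast/\sqrt{\delta_\ast}$. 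Your route is viable but requires resolvent bounds for the velocity block that are uniform in $\lambda$ on the relevant half plane and a careful accounting of the mapping properties of $\divergence(\lambda+A_u)^{-1}\nabla$; the paper's energy argument gets the same conclusion more directly and simultaneously handles semisimplicity of $0$.

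The genuine gap is your treatment of condition (i) of the stability principle: that near $v_\ast$ the set $\cE$ of equilibria is a $C^1$-manifold whose tangent space at $v_\ast$ is $N(A_0)$. You exhibit the two-parameter family of constant states inside $\cE$ and compute $N(A_0)$, but you never show that there are \emph{no other} equilibria in a neighborhood of $v_\ast$; you simply write $T_{v_\ast}\cE = \{0\}\times\R\times\R$ as if $\cE$ were already known to be the constant family. This is precisely the content of the paper's Lemma \ref{lemma:7.2}, which is a separate \emph{nonlinear} energy estimate on the full stationary system: one tests the equilibrium equation with $(u, C_h h, C_a a)$ for the specific constants in \eqref{eq:ChCa}, uses the cancellation of $C_h\int_\Omega \divergence(uh)h$ against part of $\int_\Omega \tfrac12\nabla P(v)\cdot u$, and needs smallness of \emph{both} $r$ and $\delta$ to absorb the remainders and conclude $u=0$ with $h,a$ constant. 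Your argument can be repaired either by supplying such an estimate or by invoking the remark in \cite{PSZ09}/\cite{PS16} that semisimplicity of $0$ together with an embedded $C^1$-family of equilibria of dimension $\dim N(A_0)$ already forces $\cE$ to coincide with that family locally (a Lyapunov--Schmidt argument); but as written this step is missing, and it is one of the three pillars of the proof. Two smaller inaccuracies: the paper does not argue that $S_h, S_a$ vanish on the equilibrium family (they do not for general $f$); it simply sets $S_h=S_a=0$ as part of the ``no forcing'' hypothesis. And the Coriolis term is harmless not because it ``shifts the spectrum imaginarily'' but because its real part vanishes in the $L_2$ pairing.
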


\begin{remark}{\rm
Equations \eqref{eq:cs} show that without forcing, the solution tends to the equilibria  $h_\ast = \frac{1}{|\Omega|}\int_{\Omega} h(0) dx$ and $a_\ast= \frac{1}{|\Omega|}\int_\Omega a(0) dx$ determined by the initial mean values of $h$ and $a$, respectively.
}
\end{remark}

\vspace{.2cm}
\section{Hibler's ice stress viewed as a second order quasilinear operator}\label{sec:3}

In this section we interpret the term $\divergence \sigma$ as a quasilinear second order operator. To this end, denote by  $\eps =(\eps)_{ij}$ the deformation or rate of strain tensor
and define the map $\mathbb{S} \colon \R^{2\times 2} \to \R^{2\times 2}$ in such a way  that 
\begin{align*}
    \mathbb{S} \eps
    &= \begin{pmatrix}
    (1 + \frac{1}{e^2}) \eps_{11} + (1 - \frac{1}{e^2}) \eps_{22} & \frac{1}{e^2} (\eps_{12} + \eps_{21}) \\
    \frac{1}{e^2} (\eps_{12} + \eps_{21}) & (1 - \frac{1}{e^2}) \eps_{11} + (1 + \frac{1}{e^2}) \eps_{22}
    \end{pmatrix}.
	\end{align*}
	If $\eps \in \R^{2\times 2}$ is identified with the vector $(\eps_{11}, \eps_{12}, \eps_{21}, \eps_{22})^T \in \R^4$, $\mathbb{S}$ corresponds to the symmetric positive semi-definite matrix
	\begin{align*}
	    \mathbb{S} = \bigl(\mathbb{S}^{kl}_{ij}\bigr) 
	    &= \begin{pmatrix}
	    1+\frac{1}{e^2} & 0 & 0 & 1-\frac{1}{e^2} \\ 
		0 & \frac{1}{e^2}  & \frac{1}{e^2} & 0   \\
		0 & \frac{1}{e^2} & \frac{1}{e^2} & 0  \\
	     1-\frac{1}{e^2} & 0 & 0 & 1 + \frac{1}{e^2} 
	    \end{pmatrix},
	\end{align*}	
	and we obtain 
	\begin{align}\label{eq:triangS}
	\triangle^2 (\eps) & = \eps^T \mathbb{S} \eps = \sum \limits_{i,j,k,l=1}^2 \eps_{ik} \mathbb{S}_{ij}^{kl} \eps_{jl}
	 = (\eps_{11} + \eps_{22})^2 + \frac{1}{e^2} (\eps_{11} - \eps_{22})^2 + \frac{1}{e^2} (\eps_{12} + \eps_{21})^2. 
	\end{align}
The stress tensor $\sigma=\sigma(\eps,P)$ can then be represented as 
\begin{align}\label{eqintroofstresstensors}
   \sigma(\eps,P) = S(\eps, P) - \frac{P}{2}I, \quad \text{where} \quad S(\eps,P) &:= \frac{P}{2}\frac{\mathbb{S} \eps}{\triangle(\eps)}.
\end{align}
As explained in Section 2, for $\delta>0$ we then substitute $S$ by
\begin{align}\label{eqregularizedstresstensor}
    S_\delta = S_\delta (\eps,P) &:= \frac{P}{2}\frac{\mathbb{S} \eps}{\triangle_\delta (\eps)},
\end{align} 
and define Hibler's operator as 
$$ \cA^H u := - \divergence S_\delta (u) = - \divergence\Bigl(\frac{P}{2}\frac{\mathbb{S} \eps}{\sqrt{\delta + \eps^T \mathbb{S} \eps}}\Bigr). 
$$
Employing product and chain rule as well as symmetries of $\mathbb{S}$, we infer that
\begin{align*}
    \divergence\Bigl(\frac{P}{2}\frac{\mathbb{S} \eps}{\sqrt{\delta + \eps^T \mathbb{S} \eps}}\Bigr)_i
 &= \sum \limits_{j,k,l=1}^2 \frac{P}{2}\frac{1}{\triangle_\delta (\eps)}\Bigl(\mathbb{S}_{ij}^{kl} - \frac{1}{\triangle_\delta ^2 (\eps)}(\mathbb{S} \eps)_{ik} (\mathbb{S} \eps)_{lj} \Bigr) \partial_k \eps_{jl}
    + \frac{1}{2 \triangle_\delta (\eps)}\sum \limits_{j=1}^2 (\partial_j P) (\mathbb{S} \eps)_{ij}
\end{align*}
for $i=1,2$. Exploiting symmetries of $\mathbb{S}$ and $\eps$ once again,
we conclude that
\begin{align}\label{eq:hiblerop}
 (\cA^H u)_i
 &= \sum \limits_{j,k,l=1}^2 \frac{P}{2}\frac{1}{\triangle_\delta (\eps)}\Bigl(\mathbb{S}_{ij}^{kl} - \frac{1}{\triangle_\delta ^2 (\eps)}(\mathbb{S} \eps)_{ik} (\mathbb{S} \eps)_{jl} \Bigr) D_k D_l u_j - 
\frac{1}{2 \triangle_\delta (\eps)}\sum \limits_{j=1}^2 (\partial_j P) (\mathbb{S} \eps)_{ij}
\end{align}
for $i=1,2$ and $D_m = - \mathrm{i} \partial_m$.

We denote the coefficients of the principal part of $\cA^H$ by 
\begin{align}\label{eq:aijkl}
{a_{ij}^{kl}(\nabla u,P) := \frac{P}{2}\frac{1}{\triangle_\delta (\eps)}\bigl(\mathbb{S}_{ij}^{kl} - \frac{1}{\triangle_\delta ^2 (\eps)}(\mathbb{S} \eps)_{ik} (\mathbb{S} \eps)_{jl} \bigr)}.
\end{align}

In view of the symmetries of $\mathbb{S}$ and $\mathbb{S} \eps$ we conclude that
\begin{align}\label{eq:symofaijkl}
{a_{ij}^{kl} = a_{ji}^{lk} = a_{kl}^{ij} = a_{kj}^{il} = a_{il}^{kj} = a_{lk}^{ji}}.
\end{align}

For given $v_0=(u_0, h_0, a_0) \in V_\mu$  with $\mu > \frac12 +\frac{1}{p} + \frac{1}{q}$, let
\begin{equation}\label{def:ahv0}
[\cA^H(v_0)u]_i = \sum_{j,k,l=1}^2 a_{ij}^{kl}(\nabla u_0,P(h_0,a_0))D_kD_lu_j  - \frac{1}{2 \triangle_\delta (\eps(u_0))}\sum \limits_{j=1}^2 (\partial_j P(h_0,a_0)) (\mathbb{S} \eps(u))_{ij}
\end{equation}
be Hibler's operator with frozen coefficients. 
The representation in \eqref{eq:aijkl} shows that the principal coefficients $a^{kl}_{ij}(\nabla u_0,P(h_0,a_0))$ 
of $\cA^H(v_0)$, as well as lower-order terms, depend smoothly on $u_0$, $h_0$ and $a_0$ with respect to the $C^1$-norm.  Moreover, the embedding \eqref{eq:besov}  
yields that 
they lie in $C(\overline{\Omega})$. 
  
\vspace{.2cm}
\section{Hibler's operator: Ellipticity and Maximal Regularity}\label{sec:4}

In this section we show that Hibler's operator $\cA^H(v_0)$ given as in \eqref{def:ahv0} defines a strongly elliptic operator and, when subject to Dirichlet boundary conditions, satisfies the 
Lopatinskii-Shapiro condition. This implies then that the  $L_q$-realization $A^H_D(v_0)$ of $\cA^H(v_0)$ given by 
\begin{equation}\label{eq:lqreal}
[A^H_D(v_0)]u:= [\cA^H(v_0)]u \mbox{ for } u \in D(A^H_D(v_0)):= \{u \in H^{2}_q(\Omega;\R^2) : u = 0 \ \text{on} \, \partial \Omega\}
\end{equation}
satisfies the maximal $L_q$-regularity property and furthermore that $A^H_D(v_0)$ admits a bounded $H^\infty$-calculus on  $L_q(\Omega;\R^2)$.        

For $\theta \in (0,\pi]$ let  ${\Sigma_\theta := \{z \in \C \setminus \{0\} : \vert \arg z \vert < \theta\}}$ be a sector in the complex plane and let $D=-i(\partial_1,\ldots,\partial_n)$.  
We start by recalling from \cite{DHP03} that for $x \in \R^n$ an  operator $\cB$ of the form $\cB(x,D)=\sum_{|\alpha|\le 2} b_\alpha(x)D^\alpha$ with continuous top order coefficients 
$b_\alpha \in \cL(E)$, $E$ an arbitrary Banach space, is said to be {\em parameter-elliptic of angle $\phi \in (0,\pi]$} if the spectrum ${\sigma(\cB_{\#}(x,\xi))}$ of the symbol of the principal part 
$\cB_{\#}(x,\xi) = \sum_{|\alpha|=2} b_\alpha(x)\xi^{\alpha}$ satisfies 
\begin{align*}
    \sigma(\mathcal{B}_{\#}(x,\xi)) \subset \Sigma_{\phi}
\end{align*}
for every $x \in \R^n$ and every $\xi \in \R^n$ with $\vert \xi \vert = 1$. We call ${\phi_{\mathcal{B}} = \inf\{\phi : \sigma(\mathcal{B}_{\#}(x,\xi)) \subset \Sigma_\phi\}}$ the {\em angle of 
ellipticity}  of 
$\cB$. Moreover, the operator $\cB(x,D)$ is called {\em normally elliptic} if it is parameter-elliptic of angle $\phi_{\cB} < \pi/2$. If $E$  is a Hilbert space, an operator $\cB$ of the above 
form $\cB(x,D)=\sum_{|\alpha|= 2} b_\alpha(x)D^\alpha$ is called {\em strongly elliptic} if there exists a constant $c > 0$ such that
\begin{align}\label{def:se}
    \Ret\; (\cB_{\#}(x,\xi) w \vert w)_E  \ge c \Vert w \Vert_E^2
\end{align}
for all $x \in \R^n$, $\xi \in \R^n$ with $\vert \xi \vert = 1$ and all  $w \in E$.  Here  $(\cdot \vert\cdot)_E$ denotes the inner product on $E$. To understand this condition, let $n(L)$ be the numerical range of a bounded
linear operator on $E$, i.e., $n(L)$ is the closure of the set consisting of all $z \in \C$ such that $z=(Lw \vert w)_E$ for some $w \in E$ with $\Vert w \Vert_E=1$. Since $\sigma(L) \subset n(L)$ we see that 
every strongly elliptic operator  $\cB$ is parameter-elliptic of angle $\phi_\cB < \pi/2$, hence even normally elliptic. 

Consider now the special case of homogeneous differential operators acting on $\C^n$-valued functions as 
$$ 
[\cB(x,D)v(x)]_i := \sum_{j,k,l=1}^n b_{ij}^{kl}(x) D_k D_l \, v_j(x), \quad x \in \Omega.
$$
Here $\Omega \subset \R^n$ denotes a domain with boundary of class $C^2$. Its symbol is defined as 
$$ 
\big(\cB_{\#}(x,\xi)\big)_{ij} := \sum_{k,l=1}^n b_{ij}^{kl}(x) \xi_k \xi_l, \quad x \in \Omega.
$$



We now show that $\cA^H(v_0)$ is strongly elliptic provided $v_0 \in V_\mu$.  

\begin{proposition}\label{prop:strongell}
Let $p,q \in (1,\infty)$ and  $\mu \in (\frac{1}{p},1]$ such that \eqref{eq:pq} holds. Then, for fixed $v_0 \in V_\mu$, the principal part of Hibler's operator $\cA^H(v_0)$ defined as in \eqref{def:ahv0} 
is strongly elliptic and moreover parameter-elliptic of angle $\phi_{\cA^H(v_0)} =0$. 
\end{proposition}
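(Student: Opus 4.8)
The plan is to prove strong ellipticity first and then read off the angle of ellipticity. Write the principal symbol of $\cA^H(v_0)$ as the $2\times 2$ matrix $M(\xi)$ with entries $M_{ij}(\xi)=\sum_{k,l=1}^2 a_{ij}^{kl}(\nabla u_0,P(h_0,a_0))\,\xi_k\xi_l$. Using the symmetries \eqref{eq:symofaijkl} one checks that $M(\xi)$ is real and symmetric, so for $w\in\C^2$ one has $\Ret\,(M(\xi)w\vert w)=(M(\xi)\Ret w\vert\Ret w)+(M(\xi)\Imt w\vert\Imt w)$, and it suffices to bound the real quadratic form $Q(\xi,w):=(M(\xi)w\vert w)$ below by $c|w|^2$ for real $w$ and all $\xi$ with $|\xi|=1$.

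The key step is to recognize the Cauchy--Schwarz structure hidden in \eqref{eq:aijkl}. I would introduce the rank-one tensor $\tau:=w\otimes\xi$, i.e. $\tau_{ik}=w_i\xi_k$, and the symmetric positive semi-definite bilinear form $\langle A,B\rangle_{\mathbb S}:=\sum_{i,j,k,l}A_{ik}\,\mathbb{S}_{ij}^{kl}\,B_{jl}$ coming from \eqref{eq:triangS}, so that $\|\eps\|_{\mathbb S}^2=\triangle^2(\eps)$ and, since $(\mathbb{S}\eps)_{ik}=\sum_{j,l}\mathbb{S}_{ij}^{kl}\eps_{jl}$, one has $\sum_{i,k}(\mathbb{S}\eps)_{ik}\tau_{ik}=\langle\tau,\eps\rangle_{\mathbb S}$. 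Substituting \eqref{eq:aijkl} into $Q$ then collapses the two terms into
$$
Q(\xi,w)=\frac{P}{2\triangle_\delta}\Bigl(\|\tau\|_{\mathbb S}^2-\frac{\langle\tau,\eps\rangle_{\mathbb S}^2}{\triangle_\delta^2}\Bigr),
$$
everything evaluated at $\eps=\eps(u_0)$ and $P=P(h_0,a_0)$. Applying Cauchy--Schwarz, $\langle\tau,\eps\rangle_{\mathbb S}^2\le\|\tau\|_{\mathbb S}^2\,\triangle^2(\eps)$, together with $\triangle_\delta^2=\delta+\triangle^2(\eps)$, yields the clean bound
$$
Q(\xi,w)\ge\frac{P}{2\triangle_\delta}\,\|\tau\|_{\mathbb S}^2\Bigl(1-\frac{\triangle^2(\eps)}{\triangle_\delta^2}\Bigr)=\frac{P\,\delta}{2\,\triangle_\delta^3}\,\|\tau\|_{\mathbb S}^2 .
$$
A direct computation with $\tau=w\otimes\xi$ from \eqref{eq:triangS} gives the identity $\|\tau\|_{\mathbb S}^2=(w\cdot\xi)^2+e^{-2}|w|^2|\xi|^2$, so $\|\tau\|_{\mathbb S}^2\ge e^{-2}|w|^2$ whenever $|\xi|=1$.

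It then remains to make the prefactor uniformly positive. Since $v_0\in V_\mu$ we have $h_0\ge\kappa$ and $a_0\in[0,1]$, hence $P(h_0,a_0)=p^\ast h_0\exp(-c(1-a_0))\ge p^\ast\kappa e^{-c}>0$; moreover the embedding \eqref{eq:besov} renders $u_0,h_0,a_0$ continuous on $\overline\Omega$, so that $P$ and $\triangle_\delta=\sqrt{\delta+\triangle^2(\eps(u_0))}$ are bounded above while $\triangle_\delta\ge\sqrt\delta>0$. Combining these gives a constant $c=c(v_0,\delta,e)>0$ with $Q(\xi,w)\ge c|w|^2$, which is exactly the strong ellipticity condition \eqref{def:se} on $E=\C^2$. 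Finally, as $M(\xi)$ is real, symmetric and, by the above, positive definite for each unit $\xi$, its spectrum lies in $(0,\infty)\subset\Sigma_\phi$ for every $\phi>0$, whence $\phi_{\cA^H(v_0)}=0$. I expect the main obstacle to be precisely the second paragraph: spotting that the two contributions in \eqref{eq:aijkl} reassemble into a norm and an inner product for the form $\mathbb{S}$ is what converts the delicate sign question into a one-line Cauchy--Schwarz estimate and makes the regularization parameter $\delta$ surface explicitly in the ellipticity constant.
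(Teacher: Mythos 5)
Your proposal is correct and follows essentially the same route as the paper: both reduce the complex quadratic form to its real and imaginary parts, rewrite $\sum_{i,j,k,l}a_{ij}^{kl}d_{ik}d_{jl}$ as $\frac{P}{2\triangle_\delta^3}\bigl(\triangle_\delta^2\,\triangle^2(d)-\langle d,\eps\rangle_{\mathbb S}^2\bigr)$ for the rank-one tensor $d=w\otimes\xi$, and extract the factor $\delta$ via the bound $\langle d,\eps\rangle_{\mathbb S}^2\le\triangle^2(d)\,\triangle^2(\eps)$ together with $\triangle^2(\xi\otimes w)=(\xi\cdot w)^2+e^{-2}|w|^2$. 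The only cosmetic difference is that you invoke Cauchy--Schwarz for the semi-definite form $\mathbb S$ abstractly, whereas the paper proves the same inequality (its estimate \eqref{eq:ellest2}) by hand with Young's inequality in the coordinates $d_I,d_{II},d_{III}$.
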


\begin{proof}
Recall that the principal part of $\cA^H(v_0)$ is given by 
\begin{align*}
    \mathcal{A}_{\#}^H(x,\xi) := \sum_{k,l=1}^2 a_{ij}^{kl}(x) \xi_k \xi_l, \quad x \in \Omega,
\end{align*}
with
$$
{a_{ij}^{kl}(\nabla u_0,P_0) := \frac{P_0}{2}\frac{1}{\triangle_\delta (\eps)}\bigl(\mathbb{S}_{ij}^{kl} - \frac{1}{\triangle_\delta ^2 (\eps)}(\mathbb{S} \eps)_{ik} (\mathbb{S} \eps)_{jl} \bigr)}
$$
as in \eqref{eq:aijkl} and $P_0=P(h_0,a_0)$. 
Taking into account the underlying symmetries we see that the symbol of the principal part of $\cA^H(v_0)$ is given by 
\begin{align}\label{eq:matrix}
    \mathcal{A}_{\#}^H(x,\xi)
    &= \begin{pmatrix}
    a_{11}^{11} \xi_1 ^2 + 2 a_{11}^{12} \xi_1 \xi_2 + a_{11}^{22} \xi_2 ^2 & a_{11}^{12} \xi_1 ^2 + (a_{12}^{12} + a_{11}^{22}) \xi_1 \xi_2 + a_{12}^{22} \xi_2 ^2 \\
    a_{11}^{12} \xi_1 ^2 + (a_{12}^{12} + a_{11}^{22}) \xi_1 \xi_2 + a_{12}^{22} \xi_2 ^2 & a_{11}^{22} \xi_1 ^2 + 2 a_{12}^{22} \xi_1 \xi_2 + a_{22}^{22} \xi_2 ^2
    \end{pmatrix}.  
\end{align}
For given $d \in \R^{2\times 2}$, we use the notation 
\begin{align*} d_I := d_{11} + d_{22}, \quad d_{II} := d_{11} - d_{22}, \quad d_{III} := \frac{d_{12}+ d_{21}}{2}.
	\end{align*}
To verify condition \eqref{def:se}, first recall that for any given $d \in \R^{2\times 2}$, (see \eqref{eq:triangS}), 
\begin{align}\label{eq:ellest1}
	d^T \mathbb{S} d & = d_I^2 + \frac{1}{e^2}(d_{II}^2 + 4d^2_{III}) =: \triangle^2(d).
	\end{align} 	
Furthermore, using Young's inequality, we estimate
\begin{align}\label{eq:ellest2}	
\bigl( d^T (\mathbb{S} \eps) \bigr)^2 
& = \Bigl( d_{I}\eps_{I} + \frac{d_{II}\eps_{II}}{e^2} + \frac{4d_{III}\eps_{III}}{e^2} \Bigr)^2 \\ \nonumber
& \leq d_I^2 \Bigl(\eps_I^2 + \frac{\eps_{II}^2}{e^2} + \frac{4\eps_{III}^2}{e^2} \Bigr) + \frac{1}{e^2}  \bigl( d_{II}^2 + 4 d_{III}^2 \bigr) \Bigl(\eps_I^2 + \frac{\eps_{II}^2}{e^2} + \frac{4\eps_{III}^2}{e^2}\Bigr) \\ \nonumber
& =  \triangle^2(d) \triangle^2(\eps). 
 \end{align}
	 Due to our assumptions on $v_0$, the function $\frac{P_0}{2\triangle_\delta(\eps)^3}$
is real-valued, bounded, continuous and positively bounded from below by a constant $c_{\delta,\kappa,\alpha} > 0$. Thus,  combining \eqref{eq:ellest1} and \eqref{eq:ellest2}, for all $d \in \R^{2\times2}$, we obtain
 \begin{align}\label{eq:ellest3}
	\sum_{i,j,k,l=1}^2  a_{ij}^{kl} d_{ik} d_{jl} & = \frac{P_0}{2\triangle_\delta(\eps)^3} \bigl( \triangle_\delta(\eps)^2 d^T \mathbb{S} d - (d^T \mathbb{S} \eps)^2 \bigr)
	\geq c_{\delta,\kappa,\alpha} \delta \triangle^2(d).  
	 \end{align}
	 We can now verify condition \eqref{def:se}. Given $\xi \in \R^2$ and $\eta \in \C^2$ with $|\xi| = |\eta| =1$, set $\eta_i =: x_i + \mathrm{i} y_i$ for $i = 1,2$. 
	 Because of the symmetries of $(a^{ij}_{kl})$ as pointed out in \eqref{eq:symofaijkl} and using \eqref{eq:ellest3}, we derive 
	 \begin{align*}
		\Ret(\mathcal{A}_{\#}^{H}(x,\xi)\eta \vert \eta)  & = \Ret \sum_{i,j,k,l=1}^2  a_{ij}^{kl} (\xi \otimes \eta)_{jl} \overline{(\xi \otimes \eta)}_{ik} \\
		& = \sum_{i,j,k,l=1}^2  a_{ij}^{kl} (\xi \otimes x)_{jl} (\xi \otimes x)_{ik} +  a_{ij}^{kl} (\xi \otimes y)_{jl} (\xi \otimes y)_{ik} \\
		& \geq c_{\delta,\kappa,\alpha} \delta \bigl((\triangle^2(\xi \otimes x) + \triangle^2(\xi \otimes y) \bigr). 
		 \end{align*} 
		 Moreover, using $|\xi| = 1$,
		 \begin{align*}
			 \triangle^2(\xi \otimes x) = (\xi \cdot x)^2 + \frac{1}{e^2} \Vert x \Vert^2,
			 \end{align*}
			 so using $|\eta| = 1$, 
			 $$  \triangle^2(\xi \otimes x) +  \triangle^2(\xi \otimes y) \geq \frac{1}{e^2}, $$
			 and thus $\cA^H(v_0)$ is strongly elliptic with an ellipticity constant $\geq \frac{c_{\delta,\kappa,\alpha} \delta }{e^2} $.

To prove  parameter-ellipticity of $\cA^H(v_0)$, we first note that due to \eqref{eq:matrix}, strong ellipticity implies normal ellipticity, and by symmetry of $\mathcal{A}_{\#}^H$, we conclude that 
${\sigma(\mathcal{A}_{\#}^H)(x,\xi) \subset \R_{+}}$ is valid for every ${x \in \overline{\Omega}}$ and ${\xi \in \R^2}$ with ${\vert \xi \vert = 1}$. This implies parameter-ellipticity of 
$\cA^H(v_0)$ with ${\phi_{\mathcal{A}^H(v_0)} = 0}$.
\end{proof}

The assertion of the following lemma will be crucial  in the proof of the fact that the linearized Hibler operator $\mathcal{A}^H(v_0)$ subject to  Dirichlet boundary conditions satisfies the 
Lopatinskii-Shapiro condition.

\vspace{.2cm}
\begin{lemma}\label{lemma:sne}
Let $p,q \in (1,\infty)$ and  $\mu \in (\frac{1}{p},1]$ such that \eqref{eq:pq} holds. For fixed $v_0 \in V_\mu$, let   $a_{ij}^{kl}$  be the coefficients of the principal part of  Hibler's operator  
$\cA^H(v_0)$ defined as in \eqref{eq:aijkl}.  
Assume that  ${x \in \partial \Omega}$, $\xi$, ${\nu \in \R^2}$ with ${\vert \xi \vert = \vert \nu \vert = 1}$ and ${(\xi \vert \nu) = 0}$ as well as $u$, ${v \in \C^2}$. Then 
\begin{align}
   \Ret\; \Bigl(\sum_{i,j,k,l=1}^2 &  a_{ij}^{kl}(\xi_l u_j - \nu_l v_j)\overline{(\xi_k u_i - \nu_k v_i)}\Bigr) \ge 0 \quad  \mbox{ and }  \label{sne1} \\
\Ret\; \Bigl(\sum \limits_{i,j,k,l=1}^2 & a_{ij}^{kl}(\xi_l u_j - \nu_l v_j)\overline{(\xi_k u_i - \nu_k v_i)}\Bigr) > 0 \quad \mbox{ provided } \quad  \Imt(u \vert v) \neq 0. \label{sne2}
\end{align}
\end{lemma}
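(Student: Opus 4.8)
The plan is to exploit the explicit factorization of the quadratic form that already appeared in the proof of Proposition~\ref{prop:strongell}, namely identity \eqref{eq:ellest3}, which expresses $\sum_{i,j,k,l} a_{ij}^{kl}d_{ik}d_{jl}$ in terms of $\triangle_\delta(\eps)^2\, d^T\mathbb{S}d - (d^T\mathbb{S}\eps)^2$ up to the positive scalar factor $\frac{P_0}{2\triangle_\delta(\eps)^3}$. The key structural observation is that the argument $(\xi_l u_j-\nu_l v_j)$ appearing in \eqref{sne1}--\eqref{sne2} is precisely the $(j,l)$-entry of the complex rank-one-type tensor $d := \xi\otimes u - \nu\otimes v \in \C^{2\times2}$. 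Thus both assertions reduce to understanding the complexified form $\sum_{i,j,k,l} a_{ij}^{kl}\, d_{jl}\,\overline{d_{ik}}$ for this specific $d$, and then taking real parts. Writing $d = d' + \mathrm{i}d''$ with $d',d''\in\R^{2\times2}$ and using the reality and symmetries \eqref{eq:symofaijkl} of $a_{ij}^{kl}$, the real part of the sesquilinear form splits as $\sum a_{ij}^{kl}d'_{jl}d'_{ik} + \sum a_{ij}^{kl}d''_{jl}d''_{ik}$, so by \eqref{eq:ellest3} it is bounded below by $c_{\delta,\kappa,\alpha}\delta\,(\triangle^2(d') + \triangle^2(d''))\ge 0$, which immediately yields the nonnegativity \eqref{sne1}.

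For the strict inequality \eqref{sne2}, the plan is to show that the lower bound $\triangle^2(d')+\triangle^2(d'')$ cannot vanish once $\Imt(u\mid v)\neq0$. Since $\triangle^2(\cdot)\ge0$ with $\triangle^2(m)=0$ forcing $m_I=m_{II}=m_{III}=0$, i.e. $m_{11}=m_{22}=0$ and $m_{12}+m_{21}=0$ for a real matrix $m$, simultaneous vanishing of $\triangle^2(d')$ and $\triangle^2(d'')$ is equivalent to $d_{11}=d_{22}=0$ and $d_{12}+d_{21}=0$ for the complex matrix $d=\xi\otimes u-\nu\otimes v$. I would then compute these three entries explicitly in the orthonormal frame: since $\{\xi,\nu\}$ is an orthonormal basis of $\R^2$, I can write $u$ and $v$ in this basis and read off the components of $d=\xi\otimes u-\nu\otimes v$. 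The three vanishing conditions then become three linear relations among the coordinates of $u,v$, and the point is to extract from them that $\Imt(u\mid v)=0$, contradicting the hypothesis.

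The main obstacle, and the step requiring genuine care, is the last computation: showing that $d_{11}=d_{22}=0$ together with $d_{12}+d_{21}=0$ forces $\Imt(u\mid v)=0$. Concretely, I would adopt coordinates in which $\xi=e_1$, $\nu=e_2$ (permissible since the form is invariant under the relevant orthogonal change of frame, which I should check respects $\mathbb{S}$ appropriately, or else argue directly without rotating). In these coordinates $(\xi\otimes u)_{kl}=\xi_k u_l$ and $(\nu\otimes v)_{kl}=\nu_k v_l$, so $d_{11}=u_1$, $d_{22}=-v_2$, and $d_{12}+d_{21}=u_2+v_1$. The vanishing conditions then read $u_1=0$, $v_2=0$, $u_2=-v_1$, whence $u=(0,u_2)$ and $v=(-u_2,0)$, giving $(u\mid v)=u_1\overline{v_1}+u_2\overline{v_2}=0$, so in fact $\Imt(u\mid v)=0$. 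Contrapositively, $\Imt(u\mid v)\neq0$ rules out simultaneous vanishing of $\triangle^2(d')$ and $\triangle^2(d'')$, and strict positivity in \eqref{sne2} follows.

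Two points warrant attention in writing this up cleanly. First, the reduction to the explicit $e_1,e_2$ frame must be justified: either one verifies that $\triangle^2$ and the coefficient tensor transform covariantly under the orthogonal map sending $(\xi,\nu)$ to $(e_1,e_2)$, or one simply carries the general orthonormal $\{\xi,\nu\}$ through the three-entry computation, which remains elementary. Second, one should note that the strictness genuinely uses $\delta>0$ through the bound $\ge c_{\delta,\kappa,\alpha}\delta\,\triangle^2(d)$ established in \eqref{eq:ellest3}; the degenerate limit $\delta\to0$ would destroy the coercivity of the quadratic form and hence the argument. With these observations, \eqref{sne1} is the direct consequence of \eqref{eq:ellest3} applied to $d'$ and $d''$, and \eqref{sne2} follows from the coercivity combined with the kernel characterization of $\triangle^2$.
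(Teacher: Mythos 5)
Your proposal is correct and follows essentially the same route as the paper's proof: both reduce \eqref{sne1} and \eqref{sne2} to the real lower bound \eqref{eq:ellest3} applied separately to the real and imaginary parts of the matrix $d$ with entries $d_{jl}=\xi_l u_j-\nu_l v_j$, and both obtain strictness from the kernel characterization of $\triangle^2$. The only (cosmetic) differences are that the paper works with general orthonormal $\xi,\nu$ and deduces $\Imt(u\vert v)=0$ from the vanishing of the diagonal entries alone, never needing the condition $d_{12}+d_{21}=0$ or a rotation to the frame $\xi=e_1$, $\nu=e_2$; also note the harmless sign slip in your frame computation, where $d_{12}+d_{21}=u_2-v_1$ rather than $u_2+v_1$, which does not affect your conclusion since only $u_1=0$ and $v_2=0$ are used to get $(u\vert v)=0$.
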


\begin{proof}
Let ${x \in \partial \Omega}$, $\xi$, ${\nu \in \R^2}$ with ${\vert \xi \vert = \vert \nu \vert = 1}$ and ${(\xi \vert \nu) = 0}$ as well as $u$, ${v \in \C^2}$.
We introduce the notation ${u_i = x_i + \mathrm{i} y_i}$ and ${v_i = \tilde{x}_i + \mathrm{i} \tilde{y}_i}$, ${i=1,2}$. 
Using the symmetries of $(a^{kl}_{ij})$ as in \eqref{eq:symofaijkl} and the estimate \eqref{eq:ellest3}, we obtain
\begin{align}\label{eq:startproofsne}
    \quad \Ret &\; \Bigl(\sum \limits_{i,j,k,l=1}^2 a_{ij}^{kl}(\xi_l u_j - \nu_l v_j)\overline{(\xi_k u_i - \nu_k v_i)}\Bigr)\\
  & = \sum \limits_{i,j,k,l=1}^2 a_{ij}^{kl}(\xi_l x_j - \nu_l \tilde{x}_j)(\xi_k x_i - \nu_k \tilde{x}_i) + \sum \limits_{i,j,k,l=1}^2 a_{ij}^{kl}(\xi_l y_j - \nu_l \tilde{y}_j)(\xi_k y_i - \nu_k \tilde{y}_i) \notag\\
  & \geq c_{\delta, \kappa,\alpha} \delta \Bigl((\triangle^2(\xi \otimes x - \nu \otimes \tilde{x}) + \triangle^2(\xi \otimes y - \nu \otimes \tilde{y}) \Bigr) \geq 0 \notag.
\end{align}
Thus, condition \eqref{sne1} is satisfied. To verify condition \eqref{sne2}, it remains to consider the case $ = 0$ in the last line and deduce 
\begin{align}\label{eq:Imuv}
	\Imt(u \vert v) = \tilde{x}_1 y_1 - x_1 \tilde{y}_1 +  \tilde{x}_2 y_2 - x_2 \tilde{y}_2 = 0. 
	\end{align}
For general $d \in \R^{2\times 2}$, $\triangle^2(d) = 0$ implies $d_{11} = d_{22} = 0$, so from $ = 0$ in \eqref{eq:startproofsne} we obtain 
\begin{align}\label{eq:triang0}
 \xi_1 x_1 - \nu_1 \tilde{x}_1 =  \xi_2 x_2 - \nu_2 \tilde{x}_2=  \xi_1 y_1 - \nu_1 \tilde{y}_1=  \xi_2 y_2 - \nu_2 \tilde{y}_2 = 0.
 \end{align}
Due to $|\xi| = 1$, either $\xi_1 \neq 0$ or $\xi_2 \neq 0$. Assume $\xi_1 \neq 0$. In view of  $(\xi \vert \nu) = 0$ and $\vert \nu \vert = 1$, this implies $\nu_2 \neq 0$. Thus, from \eqref{eq:triang0}, we obtain 
$$
x_1 = \frac{\nu_1}{\xi_1} \tilde{x}_1, \quad \tilde{x}_2 = \frac{\xi_2}{\nu_2} x_2, \quad y_1 = \frac{\nu_1}{\xi_1} \tilde{y}_1, \quad \tilde{y}_2 = \frac{\xi_2}{\nu_2} y_2.
$$
Plugging this into \eqref{eq:Imuv} immediately yields the claim. The case $\xi_2 \neq 0$ follows analogously. 
%
\end{proof}

\vspace{.2cm}
We proceed by showing that Hibler's operator subject to Dirichlet boundary conditions fulfills the Lopatinskii-Shapiro condition. For the formulation 
of  the latter condition in the context of parabolic boundary value problems subject to general boundary conditions, see e.g. \cite{DHP03, DDHPV04, DHP07}. 

In our  context of Hibler's operator subject to Dirichlet boundary conditions, the    Lopatinskii-Shapiro condition reads as follows: For all  $x_0 \in \partial\Omega$, all $\xi \in \R^2$ with 
$(\xi,\nu(x))=0$, and all $\lambda \in \C$ satisfying $\Ret \lambda \geq 0$ and $|\xi| + |\lambda| \ne 0$, any solution $w \in C_0(\R_+;\C^2)$ of  the ordinary differential equation in $\R_+$ 
\begin{equation}\label{eqodelopatinskiishapiro}
  \left\{ \begin{array}{rll}
	(\lambda + \mathcal{A}_{\#}^H(x_0,\xi - \nu(x_0)D_y))w(y) &= 0, & \; y > 0, \\[2mm]
	 w(0)&=0,
\end{array} \right.
\end{equation}
equals zero. 

\begin{proposition}\label{prop:ls}
Let $p,q \in (1,\infty)$ and  $\mu \in (\frac{1}{p},1]$ such that \eqref{eq:pq} holds. Then, for  fixed $v_0 \in V_\mu$ the principal part of Hibler's operator $\cA^H(v_0)$ 
subject to homogeneous Dirichlet boundary conditions satisfies the Lopatinskii-Shapiro condition.
\end{proposition}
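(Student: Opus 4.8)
The plan is to verify the Lopatinskii-Shapiro condition by an energy argument on the half-line, in which Lemma \ref{lemma:sne} supplies the decisive pointwise positivity of the boundary quadratic form. Let $w \in C_0(\R_+;\C^2)$ solve \eqref{eqodelopatinskiishapiro}. First I would record that the coefficient of the top order $y$-derivative of $\mathcal{A}^H_{\#}(x_0,\xi-\nu(x_0)D_y)$ equals $\mathcal{A}^H_{\#}(x_0,\nu(x_0))$, which is positive definite by Proposition \ref{prop:strongell}; hence \eqref{eqodelopatinskiishapiro} is a regular second order constant-coefficient system and every $C_0$-solution is a superposition of exponentially decaying modes, so that $w$ and $w'$ vanish as $y\to\infty$. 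I would then take the $\C^2$-inner product of \eqref{eqodelopatinskiishapiro} with $w$, integrate over $\R_+$, and integrate by parts once in the top order term. Using $w(0)=0$ together with the decay of $w,w'$ to discard all boundary terms, and writing $D_y = -\mathrm{i}\partial_y$, this yields the identity
\[
\lambda \int_0^\infty |w|^2\,dy + \int_0^\infty \sum_{i,j,k,l=1}^2 a_{ij}^{kl}\bigl(\xi_l w_j - \nu_l (D_y w)_j\bigr)\overline{\bigl(\xi_k w_i - \nu_k (D_y w)_i\bigr)}\,dy = 0 .
\]

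Taking real parts, the second integrand is exactly the form of Lemma \ref{lemma:sne} with $u = w(y)$ and $v = (D_y w)(y)$; its real part is pointwise nonnegative (this holds for arbitrary tangential $\xi$, directly from the homogeneity of estimate \eqref{eq:ellest3}, and for $\xi\neq0$ one may equivalently rescale $\xi$ to unit length). Since $\Ret\lambda \ge 0$, both summands are nonnegative and therefore both vanish. If $\Ret\lambda > 0$, the first term forces $\int_0^\infty |w|^2 = 0$, so $w\equiv0$. If $\Ret\lambda = 0$ and $\xi\neq0$, the vanishing of the second integral together with nonnegativity of its integrand gives that the quadratic form is zero for a.e.\ $y$; by the strict inequality \eqref{sne2} of Lemma \ref{lemma:sne} this forces $\Imt\bigl(w(y)\,\big|\,(D_y w)(y)\bigr)=0$ a.e. Since $\Imt(w\,|\,D_y w) = \tfrac12\frac{d}{dy}|w|^2$, the function $|w|^2$ is constant, and the decay at infinity yields $w\equiv0$. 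Finally, if $\xi=0$ (so $\lambda\neq0$ by $|\xi|+|\lambda|\neq0$), the form collapses to $(\mathcal{A}^H_{\#}(x_0,\nu(x_0))D_y w\,|\,D_y w)$, and strong ellipticity (Proposition \ref{prop:strongell}) gives $D_y w\equiv0$, hence $w$ constant and $w\equiv0$ by decay. In all cases $w\equiv 0$, which is the Lopatinskii-Shapiro condition.

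I expect the main obstacle to be the bookkeeping of the integration by parts with the fourth order tensor $a_{ij}^{kl}$ and, above all, the \emph{rigorous} justification that the boundary terms vanish: this depends on the decay of $w$ and $w'$ at infinity, which I obtain from the positive definiteness of $\mathcal{A}^H_{\#}(x_0,\nu(x_0))$ turning \eqref{eqodelopatinskiishapiro} into a well-behaved elliptic two-point problem on $\R_+$. The remaining steps—identifying $\Imt(w\,|\,D_y w)$ with $\tfrac12(|w|^2)'$ and carrying out the case distinction in $\lambda$ and $\xi$—are then routine, precisely because the nontrivial positivity has already been isolated in Lemma \ref{lemma:sne} and Proposition \ref{prop:strongell}.
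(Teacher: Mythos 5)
Your proof is correct and follows essentially the same route as the paper: the energy identity obtained by pairing the ODE with $w$, integrating over $\R_+$ and integrating by parts, combined with the positivity statements \eqref{sne1}--\eqref{sne2} of Lemma \ref{lemma:sne} and the identification of $\Imt(w\,|\,D_y w)$ with $\tfrac12 \frac{d}{dy}|w|^2$. If anything, your write-up is slightly more complete than the paper's, since you explicitly justify the decay of $w$ and $w'$ at infinity and treat the cases $\Ret\lambda>0$ and $\xi=0$ separately.
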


\begin{proof}
Taking the inner product of the above equation with a solution $w$ we obtain 
\begin{align*}
    0 = \lambda (w(y) \vert w(y)) + (\mathcal{A}^H_{\#}(x_0,\xi - \nu(x)D_y)w(y) \vert w(y)).
\end{align*} 
Integrating over $\R_{+}$ and integrating by parts yields
\begin{align}\label{eq:w} 
   0  &= \lambda \Vert w \|_2 ^2 + \int_{0}^{\infty} \sum_{i,j,k,l=1}^2  a_{ij}^{kl}(\xi_l - \nu_l(x)D_y)w_j(y)\overline{(\xi_k - \nu_k(x)D_y)w_i(y)} \,d y.
\end{align}
Our aim is to deduce from \eqref{eq:w} that $w \equiv 0$ for each solution $w \in H_2^2 (\R_{+};\C^2)$ and thus for $w \in C_b ^1(\R_{+};\C^2)$. 

Taking real parts in \eqref{eq:w}, we see  by \eqref{sne1} that 
\begin{align}\label{eq:int=0}
    \int_{0}^{\infty} \Ret \sum \limits_{i,j,k,l=1}^2  a_{ij}^{kl}(\xi_l w_j(y) - \nu_l(x)D_y w_j(y))\overline{(\xi_k w_i(y) - \nu_k(x)D_y w_i(y))} \,d y = 0.
\end{align}
Assuming $\frac{d}{d y} \vert w(y) \vert ^2 = 0$ for all $y > 0$ yields that $\vert w(y) \vert$ is constant on $\R_{+}$ and that consequently  $w(y)=0$ on $\R_{+}$. 

Calculating $\frac{d}{d y} \vert w(y) \vert ^2$ yields  ${\frac{d}{d y} \vert w(y) \vert ^2 = 2 \Ret \bigl(\frac{d}{d y}w(y) \vert w(y)\bigr) = - 2 \Imt (D_y w(y) \vert w(y))}$. 
Suppose now that there exists ${y_0 > 0}$ such that $\frac{d}{d y} \vert w(y) \vert ^2$ does not vanish at $y_0$. Then, by smoothness of $w$, 
there exists a neighborhood ${U \subset \R_{+}}$ of $y_0$ with ${\frac{d}{d y} \vert w(y) \vert ^2 \neq 0}$ for all ${y \in U}$. 
Then also ${\Imt (D_y w(y) \vert w(y)) \neq 0}$ for all ${y \in U}$. 
Setting ${u:= D_y w(y) \in \C^2}$ and ${v:= w(y) \in \C^2}$ for  ${y \in U}$ we see that  ${\Imt (u,v) \neq 0}$ for all ${y \in U}$ as well. Consequently, \eqref{sne2}  and \eqref{sne1} yield  
\begin{align*}
    \int_{0}^{\infty} \Ret \sum \limits_{i,j,k,l=1}^2   a_{ij}^{kl}(\xi_l w_j(y) - \nu_l(x)D_y w_j(y))\overline{(\xi_k w_i(y) - \nu_k(x)D_y w_i(y))} \,d y >  0. 
\end{align*}
Combining this with relation \eqref{sne1} contradicts, however, condition \eqref{eq:int=0}. Thus $w \equiv 0$. 
\end{proof}

\vspace{.2cm}
We recall that for $1<r<\infty$, the $L_r$-realization $A^H_D(v_0)$ of $\cA^H(v_0)$ subject to Dirichlet boundary conditions is given by 
\begin{equation}\label{def:lqreal}
[A^H_D(v_0)]u = [\cA^H(v_0)]u \mbox{ for } u \in D(A^H_D(v_0)):= \{u \in H^{2}_r(\Omega;\R^2) : u = 0 \ \text{on} \, \partial \Omega\}.
\end{equation}
We will now prove the maximal $L_s$-regularity property for $A^H_D(v_0)$ in the $L_r$-setting, where $1 < s,r < \infty$. 

\vspace{.2cm}
\begin{theorem}\label{thm:mr}
Let $p,q,r,s \in (1,\infty)$ and  $\mu \in (\frac{1}{p},1]$ such that \eqref{eq:pq} holds and let $v_0 \in V_\mu$ be fixed.  Then there exists $\omega_0 \in \R$ such that for all 
$\omega>\omega_0$ \\
a) $A^H_D(v_0) + \omega$ has the property of maximal $L_s[0,\infty)$-regularity on $L_r(\Omega;\R^2)$, \\
b) $A_D^H(v_0)+ \omega$ admits  a bounded $H^\infty$-calculus on $L_r(\Omega;\R^2)$. 

\end{theorem}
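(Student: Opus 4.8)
The plan is to deduce both assertions from the abstract theory of parameter-elliptic boundary value problems developed in \cite{DHP03} and refined in \cite{DDHPV04,DHP07,PS16}, rather than to perform any further computation on the symbol. By the time we reach this statement all structural hypotheses of that theory are in place for $\cA^H(v_0)$: Proposition~\ref{prop:strongell} shows that the principal part is normally elliptic with angle of ellipticity $\phi_{\cA^H(v_0)} = 0$; Proposition~\ref{prop:ls} shows that the realization subject to homogeneous Dirichlet conditions satisfies the Lopatinskii-Shapiro condition; the boundary $\partial\Omega$ is of class $C^2$; and, as observed after \eqref{def:ahv0}, the constraint $\mu > \tfrac12 + \tfrac1p + \tfrac1q$ together with the embedding \eqref{eq:besov} guarantees that both the top-order coefficients $a_{ij}^{kl}(\nabla u_0,P(h_0,a_0))$ and the lower-order coefficients in \eqref{def:ahv0} lie in $C(\overline{\Omega})$.

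First I would treat the principal part $\cA^H_\#(v_0)$ in isolation. Because it is parameter-elliptic of angle $0$, the two constant-coefficient model problems on the whole space $\R^2$ and on the half-space $\R^2_+$ --- the latter being uniquely solvable precisely because of the Lopatinskii-Shapiro condition --- admit $\cR$-bounded solution operators, uniformly in the point at which the coefficients are frozen. The main technical obstacle lies exactly here: since the coefficients are merely continuous, one cannot argue globally but must freeze the coefficients at each point, solve the two model problems, and then reassemble the variable-coefficient operator on $\Omega$ through a partition of unity together with a perturbation argument, the smallness of the perturbation being supplied by the uniform continuity of the coefficients on small balls. This localization is, however, exactly what the theorem of \cite{DHP03} (see also \cite[Ch.~6]{PS16}) delivers once normal ellipticity and the Lopatinskii-Shapiro condition are known, so invoking it directly I obtain, for $\omega$ large enough, a bounded $H^\infty$-calculus for the principal-part realization shifted by $\omega$ on $L_r(\Omega;\R^2)$, with $H^\infty$-angle not exceeding any prescribed $\phi > 0$.

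Next I would incorporate the lower-order term $-\tfrac{1}{2\triangle_\delta(\eps(u_0))}\sum_j (\partial_j P(h_0,a_0))(\mathbb{S}\eps(u))_{ij}$, which is of first order with bounded continuous coefficients and hence relatively bounded with respect to the principal part with relative bound zero. Such perturbations leave the $H^\infty$-calculus of a sectorial operator of angle $<\pi/2$ intact: after enlarging $\omega$ if necessary, the first-order part is absorbed into the shift and the full operator $A^H_D(v_0)+\omega$ retains a bounded $H^\infty$-calculus on $L_r(\Omega;\R^2)$ for all $\omega>\omega_0$. Moreover the shift ensures that $A^H_D(v_0)+\omega$ is boundedly invertible with spectrum contained in a proper subsector of the open right half-plane, which is what is required for the estimate on the \emph{unbounded} interval $[0,\infty)$. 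This establishes part~b).

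Finally, part~a) follows from part~b) by general operator-theoretic principles. Since $L_r(\Omega;\R^2)$ is a UMD space, a bounded $H^\infty$-calculus of angle $<\pi/2$ implies $\cR$-sectoriality of angle $<\pi/2$, and by the Weis characterization of maximal regularity (see \cite{KW04}) this is equivalent to maximal $L_s(0,\infty)$-regularity of $A^H_D(v_0)+\omega$ on $L_r(\Omega;\R^2)$; combined with the invertibility secured by the shift, this yields the asserted maximal $L_s[0,\infty)$-regularity. I expect the only genuinely delicate point to be the localization and perturbation underlying the principal-part estimate, while every remaining step is a direct appeal to the established machinery.
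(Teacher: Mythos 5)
Your overall architecture (establish the $H^\infty$-calculus first, then deduce maximal $L_s$-regularity via the Kalton--Weis/Weis theory on the UMD space $L_r(\Omega;\R^2)$) is legitimate, but the way you establish part b) contains a genuine gap. You claim that the partition-of-unity/perturbation argument for the \emph{bounded $H^\infty$-calculus} goes through with merely continuous top-order coefficients, ``the smallness of the perturbation being supplied by the uniform continuity of the coefficients on small balls,'' and later that relatively bounded perturbations with relative bound zero ``leave the $H^\infty$-calculus intact.'' Neither assertion is covered by the machinery you invoke: unlike sectoriality and $\cR$-sectoriality, the bounded $H^\infty$-calculus is \emph{not} stable under small relatively bounded perturbations (this is a well-known failure, with explicit counterexamples), which is precisely why the localization for the $H^\infty$-calculus in \cite{DDHPV04} requires \emph{H\"older} continuous top-order coefficients rather than just continuous ones. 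What \cite{DHP03} delivers under continuity of the coefficients plus parameter-ellipticity and Lopatinskii--Shapiro is $\cR$-sectoriality, hence maximal regularity --- not the $H^\infty$-calculus. Since you route part a) through part b), the gap in b) undermines both conclusions as you have argued them.

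The gap is repairable, and the paper's proof shows how: it obtains a) \emph{directly} from \cite{DHP03,DHP07} (continuous top-order coefficients and smooth lower-order coefficients suffice for maximal $L_s$-regularity), and obtains b) separately from \cite{DDHPV04}, observing that the required H\"older continuity of $a_{ij}^{kl}(\nabla u_0,P(h_0,a_0))$ is in fact available here because $\mu>\tfrac12+\tfrac1p+\tfrac1q$ gives the embedding $B_{qp}^{2\mu-2/p}(\Omega)\hookrightarrow C^{1,\alpha}(\overline{\Omega})$, so $\nabla u_0$, $h_0$, $a_0$ are H\"older continuous and the coefficients inherit this by smooth dependence. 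If you want to keep your ordering (b) then a)), you must (i) invoke the H\"older regularity of the coefficients when appealing to the $H^\infty$-calculus result, and (ii) absorb the first-order term not by a generic ``relative bound zero'' argument but by the specific lower-order perturbation theorem for the $H^\infty$-calculus (a perturbation mapping $D((A+\omega)^{\beta})\to X$ for some $\beta<1$), after which your deduction of a) from b) is sound.
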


\begin{proof}
By Proposition \ref{prop:strongell}, for  fixed $v_0 \in V_\mu$, the principal part of Hibler's operator $\cA^H(v_0)$ is a  parameter-elliptic operator with continous and bounded coefficients on 
$\overline{\Omega}$ having angle of ellipticity $\phi_{\cA^H(v_0)}=0$.  Furthermore, Proposition \ref{prop:ls} tells us that the principal part of Hibler's operator $\cA^H(v_0)$ subject to 
homogeneous Dirichlet boundary conditions satisfies the Lopatinskii-Shapiro condition. Since the coefficients of the lower order terms 
of $\cA^H(v_0)$ are smooth, the first assertion follows from the results in \cite{DHP03, DHP07}.   

          
The second assertion follows by the results in \cite{DDHPV04} provided the top-order coefficients of $\cA^H_D(v_0)$ are H\"older continuous. The latter condition is satisfied due to the embedding 
$B_{qp}^{2\mu-2/p}(\Omega) \hookrightarrow C^{1,\alpha}(\overline{\Omega})$.    
\end{proof}

\vspace{.2cm}
\section{Functional analytic properties of Hibler's operator}

Hibler's operator enjoys many interesting properties, some of  which we collect in the following.

\begin{proposition}\label{prop:fa}
Let ${\Omega \subset \R^2}$ be a bounded domain with $C^2$-boundary, ${1 < p,q,r < \infty}$, ${\mu \in (\frac{1}{p},1]}$  such that \eqref{eq:pq} is satisfied and for 
$v_0 \in V_\mu$ let Hibler's operator $A^H_D(v_0)$ on $L_r(\Omega;\R^2)$ with domain $D(A_D^H(v_0))$ be defined as in \eqref{def:lqreal}. Then
\begin{enumerate}
\item[a)]  $ -A_D^H(v_0)$ generates an analytic semigroup $e^{-tA^H_D(v_0)}$ on $L_r(\Omega;\R^2)$,
\item[b)]  $A_D^H(v_0)$ is an operator with compact resolvent, 
\item[c)] the spectrum $\sigma(A^H_D(v_0))$ of $A^H_D(v_0)$ viewed as an operator on $L_r(\Omega)^2$ is $r$-independent,
\item[d)] for $\alpha \in (0,1)$ and $\omega$ as in Theorem \ref{thm:mr}
$$
D((A^H_D(v_0)+\omega)^\alpha) \simeq [L_r(\Omega;\R^2),D(A_D^H(v_0))]_{\alpha} = 
\begin{cases} 
\{u \in H^{2\alpha}_r(\Omega): u|_{\partial\Omega} =0\}, \alpha  \in (1/2r,1], \\
H^{2\alpha}_r(\Omega): $ $\alpha  \in [0,1/2r),
\end{cases}
$$
\item[d)] The Riesz transform $\nabla (A^H_D(v_0)+\omega)^{-1/2}$ of $A^H_D(v_0)$ is bounded on $L_r(\Omega)^2$.
\end{enumerate}
\end{proposition}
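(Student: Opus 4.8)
The plan is to deduce all assertions from the two structural facts already at our disposal: that $\cA^H(v_0)$ is parameter-elliptic of angle $\phi_{\cA^H(v_0)}=0$ (Proposition~\ref{prop:strongell}), and that for $\omega > \omega_0$ the operator $A^H_D(v_0)+\omega$ admits a bounded $H^\infty$-calculus on $L_r(\Omega;\R^2)$ together with maximal $L_s$-regularity (Theorem~\ref{thm:mr}). A bounded $H^\infty$-calculus of angle strictly less than $\pi/2$ — which we inherit from $\phi_{\cA^H(v_0)}=0$ — in particular makes $A^H_D(v_0)+\omega$ sectorial of angle $<\pi/2$. Hence $-(A^H_D(v_0)+\omega)$ generates a bounded analytic semigroup on $L_r(\Omega;\R^2)$, and rescaling by $e^{\omega t}$ yields assertion a), namely that $-A^H_D(v_0)$ generates the analytic semigroup $e^{-tA^H_D(v_0)}$.

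For b), assertion a) guarantees $\rho(A^H_D(v_0))\neq\emptyset$, and by \eqref{def:lqreal} the domain satisfies $D(A^H_D(v_0))\subset H^2_r(\Omega;\R^2)$. Since $\Omega$ is bounded with $C^2$-boundary, the Rellich--Kondrachov embedding $H^2_r(\Omega)\hookrightarrow L_r(\Omega)$ is compact, so writing the resolvent as $(\lambda-A^H_D(v_0))^{-1}\colon L_r(\Omega;\R^2)\to D(A^H_D(v_0))\hookrightarrow L_r(\Omega;\R^2)$ — a bounded map into a compactly embedded subspace — exhibits it as a compact operator. For c), compactness of the resolvent reduces $\sigma(A^H_D(v_0))$ to a discrete set of eigenvalues of finite multiplicity. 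The resolvents for distinct exponents are consistent on $L_r(\Omega;\R^2)\cap L_{\tilde r}(\Omega;\R^2)$, and, by elliptic regularity together with the continuity of the coefficients granted by the embedding \eqref{eq:besov}, every eigenfunction belongs to $\bigcap_{1<r<\infty}D(A^H_D(v_0))$. Consequently the eigenvalues do not depend on $r$, and the spectrum is $r$-independent.

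Assertions d) and e) rest on the fact that a bounded $H^\infty$-calculus entails bounded imaginary powers, so that the fractional power domains coincide with the complex interpolation scale, $D\big((A^H_D(v_0)+\omega)^\alpha\big)\simeq [L_r(\Omega;\R^2),D(A^H_D(v_0))]_\alpha$ for $\alpha\in(0,1)$. It then remains to identify these interpolation spaces. This is the classical computation of the interpolation scale between $L_r$ and the domain of a second-order operator under homogeneous Dirichlet conditions: the Dirichlet trace persists in the scale exactly when $2\alpha>1/r$, which produces the stated dichotomy with threshold $\alpha=1/(2r)$. Finally, specialising to $\alpha=\tfrac12$, for which $2\alpha=1>1/r$ whenever $r>1$, gives $D\big((A^H_D(v_0)+\omega)^{1/2}\big)\hookrightarrow H^1_r(\Omega;\R^2)$; equivalently $\nabla(A^H_D(v_0)+\omega)^{-1/2}$ is bounded on $L_r(\Omega;\R^2)$, which is assertion e).

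The routine points are a)--c), where the generation property and compactness are immediate once Theorem~\ref{thm:mr} is in hand. I expect the main obstacle to be the interpolation identity in d): one must track the survival of the Dirichlet boundary condition across the threshold $\alpha=1/(2r)$ and verify that the complex interpolation space genuinely coincides with the Bessel potential space (with, respectively without, the trace constraint). This is where the interpolation theory of elliptic boundary value problems must be invoked, and where some care is needed because the top-order coefficients are only continuous (indeed merely Hölder continuous, as used in Theorem~\ref{thm:mr}) rather than smooth.
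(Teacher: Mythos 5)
Your proposal is correct and follows essentially the same route as the paper's (much terser) proof: generation and the fractional-power/interpolation identities from Theorem \ref{thm:mr} and the bounded $H^\infty$-calculus, compactness of the resolvent from the compact embedding $D(A^H_D(v_0))\hookrightarrow L_r(\Omega)^2$, $r$-independence of the spectrum from discreteness, and e) by specialising to $\alpha=\tfrac12$. You merely supply details the paper leaves implicit, notably the consistency-of-resolvents argument behind c).
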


\begin{proof}
Assertions a) follows by standard arguments. The compact embedding $D(A^H_D(v_0)) \hookrightarrow L_r(\Omega)^2$ implies that $A^H_D(v_0)$ has compact resolvent and that 
thus $\sigma(A^H_D(v_0))$ is independent of $r \in (1,\infty)$. Assertion d) follows from the fact that $A^H_D(v_0)+\omega$  admits a bounded $H^\infty$-calculus on $L_r(\Omega)^2$ by Theorem~\ref{thm:mr}. 
Finally, assertion e) is obtained by noting that $D((A^H_D(v_0)+\omega)^{1/2}) \subset H^1_r(\Omega)$.     
\end{proof}

\vspace{.2cm}
\section{Proof of Theorem~\ref{thm:local}}\label{seclocexofsol}
We recall from Section 2 that the ground space $X_0$ for $q \in (1,\infty)$ is given by   
\begin{align*}
X_0 = L_q(\Omega;\R^2) \times L_q(\Omega) \times L_q(\Omega) =: X_0^u \times X_0^h \times X_0^a.
\end{align*}
The regularity space $X_1$ is defined as 
$$
X_1 = \{u \in H^{2}_q(\Omega;\R^2): u=0 \mbox{ on } \partial\Omega\} \times   \{h \in H^{2}_q(\Omega): \partial_\nu h =0 \mbox{ on } \partial\Omega\} \times  
\{a \in H^{2}_q(\Omega): \partial_\nu a =0 \mbox{ on } \partial\Omega\}.
$$  
Since we are considering solutions within the class 
$$
v\in H^1_{p,\mu}(J;X_0)\cap L_{p}(J;X_1),
$$
where $J=(0,T)$ with $0<T\leq\infty$ is an interval and $\mu\in (1/p,1]$ indicates a time weight, 
the time trace space of this class is given by
\begin{equation}\label{eq:traceproof}
X_{\gamma,\mu} = (X_0,X_1)_{\mu-1/p,p} = D_{A^H_D(v_0)}(\mu-1/p,p) \times D_{\Delta_N}(\mu-1/p,p)  \times D_{\Delta_N}(\mu-1/p,p) =: X_{\gamma,\mu}^u \times  X_{\gamma,\mu}^h \times  X_{\gamma,\mu}^a 
\end{equation}
provided $p \in (1,\infty)$ and $\mu \in (1/p,1]$. Note that
$$
X_{\gamma,\mu}\hookrightarrow B_{qp}^{2(\mu-1/p)}(\Omega)^{4} \hookrightarrow C^1(\overline{\Omega})^{4}
$$
provided \eqref{eq:pq} is satisfied. 


For $\omega$ as in Theorem \ref{thm:mr} we  now consider the operator $A_\omega(v_0)$ on $X_0$ with domain $X_1$ given by the upper triangular matrix
\begin{align}\label{eq:A(v_0)omega}
A_\omega(v_0) &=
    \begin{pmatrix}
    \frac{1}{\rice h_0}( A^H_D(v_0) + \omega) & \frac{\partial_{h} P(h_0,a_0)}{2 \rice h_0}\nabla & \frac{\partial_{a} P(h_0,a_0)}{2 \rice h_0}\nabla \\
    0 & - d_h \Delta_N & 0\\
    0 & 0 & - d_a \Delta_N
    \end{pmatrix},
\end{align}
as well as 
\begin{equation}\label{eq:F(v)omega} 
F_\omega(v) = F(v) + \frac{1}{\rice h}(\omega,0,0)^T,
\end{equation}
where $F$ is given as in \eqref{eq:F(v)}

\vspace{.2cm}
\begin{lemma}\label{lem:maxreg}
Let  $p,q \in (1,\infty)$,  $\mu \in (1/p,1]$ such that  \eqref{eq:pq} is satisfied and assume that $v_0=(u_0,h_0,a_0) \in V_\mu$.  Let $J = [0,T)$ for some $0< T < \infty$. 
Then $A_\omega(v_0)$ has maximal $L_p(J)$-regularity on $X_0$.   
\end{lemma}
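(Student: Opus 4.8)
The plan is to prove maximal regularity of the full operator $A_\omega(v_0)$ by exploiting its upper triangular block structure together with the maximal regularity of the diagonal blocks, which have already been established. The key structural observation is that $A_\omega(v_0)$ decomposes as $A_\omega(v_0) = D + N$, where
\begin{align*}
D = \diag\Bigl(\tfrac{1}{\rice h_0}(A^H_D(v_0)+\omega),\, -d_h\Delta_N,\, -d_a\Delta_N\Bigr)
\end{align*}
is the diagonal part and $N$ collects the two off-diagonal coupling terms $\tfrac{\partial_h P(h_0,a_0)}{2\rice h_0}\nabla$ and $\tfrac{\partial_a P(h_0,a_0)}{2\rice h_0}\nabla$ in the first row. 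The diagonal operator $D$ acts componentwise, so it has maximal $L_p(J)$-regularity on $X_0$ if and only if each diagonal entry does on its respective factor space.

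First I would verify maximal regularity of each diagonal block. For the velocity component, Theorem~\ref{thm:mr} gives that $A^H_D(v_0)+\omega$ has maximal $L_p$-regularity on $L_q(\Omega;\R^2)$; since $h_0 \in C(\overline\Omega)$ is bounded below by $\kappa>0$ (by \eqref{def:vmu} and the embedding \eqref{eq:besov}), multiplication by the bounded positive function $\tfrac{1}{\rice h_0}$ preserves this property, so $\tfrac{1}{\rice h_0}(A^H_D(v_0)+\omega)$ has maximal $L_p$-regularity on $X_0^u$. For the thickness and compactness components, $-d_h\Delta_N$ and $-d_a\Delta_N$ are (constant multiples of) the Neumann Laplacian, which is a standard generator enjoying maximal $L_p$-regularity on $L_q(\Omega)$; after a shift by $\omega$ if needed, this is classical. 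Hence $D$ (suitably shifted) has maximal $L_p(J)$-regularity on $X_0$.

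Next I would handle the off-diagonal perturbation $N$ by a lower-order perturbation argument. The crucial point is that $N$ maps $X_1$ into a space of better integrability than $X_0$: each entry of $N$ is a first-order operator $c(x)\nabla$ with $c \in C(\overline\Omega)$, applied to the $h$- and $a$-components, which lie in $H^2_q(\Omega)$; thus $N$ is controlled by the domain of the diagonal blocks in the $h,a$ slots but only produces a gradient, i.e.\ an element of $H^1_q(\Omega;\R^2) \hookrightarrow X_0^u$. The standard strategy is to show that $N$ is $D$-bounded with relative bound zero, or more robustly that for every $\eta>0$ there is $C_\eta$ with
\begin{align*}
\|N v\|_{X_0} \le \eta \|D v\|_{X_0} + C_\eta \|v\|_{X_0}, \quad v \in X_1,
\end{align*}
using interpolation: $\|\nabla h\|_{L_q} \le \eta\|\Delta_N h\|_{L_q} + C_\eta\|h\|_{L_q}$ by the standard interpolation inequality between $H^2_q$ and $L_q$ through $H^1_q$. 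Since maximal $L_p$-regularity is stable under perturbations that are $D$-bounded with sufficiently small relative bound (see \cite{PS16}), this yields maximal $L_p(J)$-regularity of $A_\omega(v_0) = D + N$ on $X_0$ after possibly enlarging $\omega$.

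The main obstacle is making the perturbation argument rigorous in the weighted setting and with the triangular coupling: one must ensure the relative-bound estimate is uniform on the finite interval $J$ and compatible with the time-weight $\mu$, and that the shift $\omega$ chosen in Theorem~\ref{thm:mr} simultaneously accommodates the Neumann Laplacian blocks and absorbs the perturbation constant $C_\eta$. A clean alternative that avoids smallness bookkeeping is to invoke the fact that maximal regularity for upper triangular operator matrices follows directly from maximal regularity of the diagonal entries provided the off-diagonal entries are lower order relative to the diagonal domains (a standard triangular-structure lemma); I would state this and apply it, with the lower-order property of $N$ guaranteed by the gradient mapping $H^2_q \to H^1_q \hookrightarrow L_q$ together with the boundedness of the coefficients coming from the $C^1(\overline\Omega)$-embedding \eqref{eq:besov} of $X_{\gamma,\mu}$.
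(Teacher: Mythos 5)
Your proposal is correct and follows essentially the same route as the paper: the paper's proof simply notes that the diagonal entries $\tfrac{1}{\rice h_0}(A^H_D(v_0)+\omega)$, $-d_h\Delta_N$ and $-d_a\Delta_N$ each have maximal $L_p(J)$-regularity and that the upper triangular structure then yields the claim, which is exactly your decomposition $D+N$ with the first-order off-diagonal entries treated as lower order. The one step to phrase more carefully is ``multiplication by the bounded positive function $1/(\rice h_0)$ preserves maximal regularity,'' which is not a general principle; it holds here because the rescaled operator is again normally elliptic with continuous coefficients and satisfies the Lopatinskii--Shapiro condition, so Theorem~\ref{thm:mr} applies to it directly, and this is also how the paper justifies that entry.
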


\begin{proof}
By assumption, we have $h_0 \geq \kappa$ for some $\kappa >0$ and \eqref{eq:pq} implies that $1/h_0 \in C^1(\overline{\Omega})$.   
Since $\Delta_N$ as well as $A^H_D(v_0)+\omega$ and $\frac{1}{h_0}(A^H_D(v_0)+\omega)$ have the maximal $L_p(J)$-regularity property on $X_0$  by Theorem~\ref{thm:mr}, the upper triangular 
structure of $A_\omega(v_0)$ implies
that also $A_\omega(v_0)$ has the maximal $L_p(J)$-regularity property on $X_0$.     
\end{proof}

We now show that $(A_\omega,F_\omega) \in C^{1-}(V_\mu;\cL(X_1,X_0) \times X_0)$ for $\mu \in (1/p,1]$  satisfying \eqref{eq:pq} and  where $A$ and $F$ are defined as   in \eqref{eq:A(v_0)omega} and 
\eqref{eq:F(v)omega}. Recall that  $V_\mu$ is an open subset of $X_{\gamma,\mu}$ such that all $(u,h,a) \in V_\mu$ satisfy  $h \geq \kappa$ for some $\kappa >0$.

\vspace{.2cm}   
\begin{lemma}\label{lemma:lipschitz}
Let  $p,q \in (1,\infty)$,  $\mu \in (1/p,1]$ such that  \eqref{eq:pq} is satisfied. Suppose that $A_\omega$ and $F_\omega$ are  defined as in \eqref{eq:A(v_0)omega} and \eqref{eq:F(v)omega} and 
let $v_0=(u_0,h_0,a_0) \in V_\mu$.  
Then there exists $r_0>0$ and a constant $L>0$ such that $\overline{B}_{X_{\gamma,\mu}}(v_0,r_0) \subset V_\mu$ and 
\begin{align*}
 \Vert A_\omega(v_1)w - A_\omega(v_2)w \|_{X_0} & \le L \Vert v_1-v_2\|_{X_{\gamma,\mu}} \Vert w \|_{X_1}, \\
  \Vert F_\omega(v_1) - F_\omega(v_2) \|_{X_0}   & \le L \Vert v_1 - v_2\|_{X_{\gamma,\mu}}.
\end{align*}
for all $v_1,v_2 \in \overline{B}_{X_{\gamma,\mu}}(v_0,r_0)$ and all $w \in X_1$. 
\end{lemma}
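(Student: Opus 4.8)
The plan is to exploit the block structure of $A_\omega(v_0)$ in \eqref{eq:A(v_0)omega}. The two diagonal entries $-d_h\Delta_N$ and $-d_a\Delta_N$ are independent of $v_0$, so they cancel in every difference $A_\omega(v_1)-A_\omega(v_2)$; it therefore suffices to estimate the three entries of the first row, i.e.\ the Hibler block $\frac{1}{\rice h_0}(A^H_D(v_0)+\omega)$ and the two first-order couplings $\frac{\partial_h P(h_0,a_0)}{2\rice h_0}\nabla$ and $\frac{\partial_a P(h_0,a_0)}{2\rice h_0}\nabla$. The backbone of all estimates is the embedding $X_{\gamma,\mu}\hookrightarrow C^1(\overline\Omega)^4$ from \eqref{eq:besov}, valid under \eqref{eq:pq}. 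First I fix $r_0>0$ so small that $\overline B_{X_{\gamma,\mu}}(v_0,r_0)\subset V_\mu$ (possible since $V_\mu$ is open); by \eqref{def:vmu} this yields a uniform lower bound $h\geq\kappa$ and confines the tuples $(\nabla u,P(h,a))$ to a fixed compact set $K$ for all $v$ in the ball. On $K$ every coefficient of the problem is a composition of the smooth maps of Section~\ref{sec:3} with the $C^1$ data, hence Lipschitz, with a constant $L$ uniform over the ball.

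For the Hibler block I first split off the prefactor, writing the difference as $\frac{1}{\rice h_1}\bigl(A^H_D(v_1)-A^H_D(v_2)\bigr)w+\bigl(\frac{1}{\rice h_1}-\frac{1}{\rice h_2}\bigr)\bigl(A^H_D(v_2)+\omega\bigr)w$. The second summand is controlled since $h\mapsto 1/h$ is Lipschitz on $\{h\geq\kappa\}$ and $\Vert(A^H_D(v_2)+\omega)w\Vert_{X_0}\leq C\Vert w\Vert_{X_1}$ uniformly on the ball. For the first summand I use the decomposition \eqref{def:ahv0}: its second-order part contributes $\sum\bigl(a_{ij}^{kl}(v_1)-a_{ij}^{kl}(v_2)\bigr)D_kD_lw_j$, and since the $a_{ij}^{kl}$ of \eqref{eq:aijkl} are smooth in $(\nabla u_0,P(h_0,a_0))$ (the regularization $\delta>0$ keeps $\triangle_\delta\geq\sqrt\delta$ away from zero, so the rational expressions are harmless) and Lipschitz on $K$, I bound $\Vert a_{ij}^{kl}(v_1)-a_{ij}^{kl}(v_2)\Vert_{L_\infty}\leq C'\Vert v_1-v_2\Vert_{X_{\gamma,\mu}}$ and pair it with $\Vert D_kD_lw_j\Vert_{L_q}\leq\Vert w\Vert_{X_1}$. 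The first-order remainder is handled identically, pairing its Lipschitz coefficient with $\Vert\eps(w)\Vert_{L_q}\leq\Vert w\Vert_{X_1}$. The couplings $\frac{\partial_h P(h_0,a_0)}{2\rice h_0}$ and $\frac{\partial_a P(h_0,a_0)}{2\rice h_0}$ are smooth in $(h_0,a_0)$ on $\{h\geq\kappa\}$, hence Lipschitz, and act on $\nabla h,\nabla a$, again bounded by $\Vert w\Vert_{X_1}$. Summing over the finitely many indices gives the first inequality.

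For $F_\omega$ I read off from \eqref{eq:F(v)} and \eqref{eq:F(v)omega} that it is a finite sum of three types of terms. The bilinear transport terms $u\cdot\nabla u$, $\divergence(uh)=u\cdot\nabla h+h\,\divergence u$ and $\divergence(ua)$ are estimated by the $L_\infty$--$L_q$ splitting, e.g.\ $\Vert u_1\cdot\nabla u_1-u_2\cdot\nabla u_2\Vert_{L_q}\leq\Vert u_1\Vert_\infty\Vert\nabla(u_1-u_2)\Vert_{L_q}+\Vert u_1-u_2\Vert_\infty\Vert\nabla u_2\Vert_{L_q}$, both factors controlled via the $C^1$-embedding. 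The Coriolis term $\ccor n\times u$ is linear; $\frac{c_1}{h}\vert\Uatm\vert\Uatm$ depends on $v$ only through the Lipschitz factor $1/h$; and $\frac{c_2}{h}\vert\Uocean-u\vert(\Uocean-u)$ is a Nemytskii operator built from the locally Lipschitz map $z\mapsto\vert z\vert z$ and $1/h$, with $u$ bounded in $L_\infty$. Finally the sources $S_h,S_a$ of \eqref{def:sh}, \eqref{def:sa} are Nemytskii operators assembled from the $C^1$ function $f$ and products and quotients of $h,a$, Lipschitz on $K$, and the added term $\omega/(\rice h)$ is Lipschitz on $\{h\geq\kappa\}$. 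Collecting these yields the second inequality.

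The step requiring the most care is establishing that the coefficient maps are genuinely Lipschitz as Nemytskii operators on the ball, which is what makes all the product estimates above go through. This rests on three facts acting together: the regularization $\delta>0$ keeping $\triangle_\delta$ uniformly positive so the expressions in \eqref{eq:aijkl} are smooth; the lower bound $h\geq\kappa$ on $V_\mu$ rendering $1/h$ and $P,\partial_hP,\partial_aP$ smooth; and the $C^1$-embedding, which both confines the arguments to a compact set and supplies the $C^1$-norm needed to convert a pointwise Lipschitz bound into an $L_\infty$-bound on the coefficient differences. (One must also keep $a$ away from $0$ where $f(h/a)$ enters $S_h$, which should be read as part of the constraints defining $V_\mu$.) Once the coefficients are seen to be locally Lipschitz, the remaining work is the routine multiplication estimates sketched above, giving $(A_\omega,F_\omega)\in C^{1-}(V_\mu;\cL(X_1,X_0)\times X_0)$.
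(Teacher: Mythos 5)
Your proposal is correct and follows essentially the same route as the paper: fix $r_0$ so the ball stays in $V_\mu$, use the upper triangular structure to reduce to the first row, exploit the embedding $X_{\gamma,\mu}\hookrightarrow C^1(\overline\Omega)^4$ together with the smoothness of the coefficients $a_{ij}^{kl}$ (guaranteed by $\delta>0$ and $h\geq\kappa$) for the $A_\omega$-estimate, and treat the nonlinearities in $F_\omega$ by product estimates and Lipschitz Nemytskii arguments. The only cosmetic difference is that you bound the bilinear terms by an $L_\infty$--$L_q$ splitting where the paper uses H\"older with exponents $qr$ and $qr'$; your parenthetical caution about $f(h/a)$ near $a=0$ is a fair point that the paper's proof also leaves implicit.
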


\begin{proof}
Choose $r_0>0$ small enough such that  $v_1, v_2 \in  \overline{B}_{X_{\gamma,\mu}}(v_0,r_0) \subset V_\mu$. For $w=(u,h,a) \in X_1$ we then obtain 
\begin{align*}
& \quad \Vert A_\omega(v_1)w - A_\omega(v_2)w \|_{X_0} \\
    &= \Big\Vert  \frac{1}{\rice h_1}[ A_D^H(\nabla u_1,P(h_1,a_1))u + \omega u - \partial_{h}P(h_1,a_1)\nabla h - \partial_{a}P(h_1,a_1)\nabla a] \\ 
    & \quad -   \frac{1}{\rice h_2}[ A_D^H(\nabla u_2,P(h_2,a_2))u + \omega u - \partial_{h}P(h_2,a_2)\nabla h - \partial_{a}P(h_2,a_2)\nabla a] \Big\Vert_{L_q(\Omega;\R^2)} \\
   & \leq L \Vert v_1-v_2 \Vert_{C^1} (\Vert D_iD_j u \Vert_q  + \Vert u \Vert_q) + L  \Vert (h_1,a_1)^T - (h_2,a_2)^T \Vert_\infty (\Vert(\nabla h \Vert_q + \Vert \nabla a \Vert_q) \\
   & \leq L \Vert v_1-v_2 \Vert_{X_{\gamma,\mu}} \Vert w \Vert_{X_1}.  
\end{align*}
To prove the assertion for $F_\omega$ we  start with the convective term $u \nabla u$. H\"older's inequality and the embedding $X_{\gamma,\mu} \hookrightarrow L_s \cap H_s ^1$ for 
$s = qr$ and $s=qr'$ imply
\begin{align*}
    \Vert u_1 \nabla u_1 - u_2 \nabla u_2 \|_{L_q}
    &\le \Vert u_1 - u_2\|_{L_{qr^\prime}} \Vert u_1 \|_{H^1_{qr}} + \Vert u_1 - u_2 \|_{H^{1}_{qr^\prime}} \Vert u_2 \|_{L_{qr}}\\
    &\le 2C r_0 \Vert u_1 - u_2 \|_{X_{\gamma,\mu}^u}.
\end{align*} 
A  similar argument shows that 
\begin{align*}    
\Vert \divergence(u_1 h_1) - \divergence(u_2h_2) \|_{X_0 ^h} &\le C \Vert v_1 - v_2\|_{X_{\gamma,\mu}} \quad \mbox{and}\\
    \Vert \divergence(u_1 a_1) - \divergence(u_2 a_2) \|_{X_0 ^a} &\le C \Vert v_1 - v_2 \|_{X_{\gamma,\mu}}.
\end{align*}
Furthermore, note that   $\tatm$ is constant in $v$ and thus  ${\frac{\tatm}{\rice h_0}}$ is Lipschitz continuous in $v$. 
Concerning $\tocean$, we may assume that $\Uatm = 0$ (otherwise consider $u + \Uatm$). It thus suffices to show  that $ v \mapsto \frac{1}{h}{u \vert u \vert}$ is Lipschitz continuous 
viewed  as a mapping from $V_\mu$ to $L_q$. 
The term $\frac{1}{\rice h}\omega u$ is treated in the same way.

Finally, we consider the terms $S_h$ and $S_a$ defined as in \eqref{def:sh} and \eqref{def:sa}, respectively.  
By assumption, $f \in C^1$ and hence $S_h$ as well as $S_a$ are  Lipschitz continuous in $v$.
\end{proof}

The assertion of Theorem \ref{thm:local} follows hence by the local existence theorem for quasilinear evolution equation as described e.g. in \cite[Thm.~5.1.1]{PS16}.  

\vspace{.2cm}
\section{Proof of Theorem~\ref{thm:global}}\label{sec:global}

Throughout this section, we consider $p,q \in (1,\infty)$ and $\mu \in (\frac{1}{p},1]$ such that \eqref{eq:pq} holds. Moreover, we abbreviate $P(h_\ast,a_\ast)$ by $P_\ast$, i.e.,
$$
P_\ast = p^\ast h_\ast \exp(-c(1-a_\ast)).
$$
We study equilibria in the case that no external forces are present in the momentum equation, i.e., 
$$
 -g \nabla H = \frac{c_1}{h} \vert U_\text{atm} \vert U_\text{atm} = \frac{c_2}{h} \vert U_\text{ocean} - u \vert (U_\text{ocean} - u) = 0,
$$
and neglect 
external freezing and melting effects by setting 
$$S_h = S_a = 0.$$ 
For $A$ as in \eqref{eq:A(v)} and the simplified semilinear right-hand side $F_s$ given by
$$
F_s(v) = \begin{pmatrix}
- u \cdot \nabla u - \ccor(n \times u) \\ - \divergence(u h) \\  -\divergence(u a)
\end{pmatrix},
$$
we prove similarly as in Section~\ref{seclocexofsol} that there is an open set $V \subset V_\mu \subset X_{\gamma,\mu}$ such that
\begin{align}\label{eq:smoothnessaf}
(A,F_s) \in C^1(V,\mathcal{L}(X_1,X_0) \times X_0).
\end{align}
We denote by $\cE \subset V \cap X_1$ the set of equilibrium solutions of
\begin{align}\label{eq:equi}
v^\prime + A(v)v = F_s(v), \quad t>0, \quad v(0)=v_0.    
\end{align}
An equilibrium solution $v \in \cE$ is characterized  by $v \in V \cap X_1$ as well as $A(v) v = F_s(v)$.

For $h_\ast \ge \kappa$ and $a_\ast \ge 0$ 
constant in time and space, 
$v_\ast = (0, h_\ast, a_\ast) \in V \cap X_1$ is an equilibrium solution of \eqref{eq:equi} due to $A(v_\ast)v_\ast = 0 = F_s(v_\ast)$.

To prove  Theorem \ref{thm:global}, we aim to apply the generalized principle of linearized stability, see \cite{PSZ09} or \cite{PS16}. Note that we already verified 
that $v_\ast \in V \cap X_1$ is an equilibrium of \eqref{eq:equi} and that $(A,F_s)$ satisfy \eqref{eq:smoothnessaf}.
Consider next the linearization of \eqref{eq:equi} at $v_\ast$ which reads as
$$
A_0 v = A(v_\ast) v + (A^\prime(v_\ast)v)v_\ast - F_s^\prime(v_\ast)v, v \in X_1.
$$
Computing $A_0 v$ we see first that $A(v_\ast)v$ is given by  
$$
A(v_\ast)v = 
\begin{pmatrix}
\frac{1}{\rho_\text{ice}h_\ast}A_D^H(v_\ast) u + \frac{\partial_h P_\ast}{2 \rho_\text{ice}h_\ast} \nabla h + \frac{\partial_a P_\ast}{2 \rho_\text{ice}h_\ast} \nabla a \\ - d_h \Delta_N h \\ - d_a \Delta_N a
\end{pmatrix},
$$
where 
$$
(A_D^H(v_\ast)u)_i = - \frac{P_\ast}{2 \delta^{1/2}} \sum \limits_{j,k,l=1}^2 \mathbb{S}_{ij}^{kl} \partial_k \partial_l u_j.
$$
Secondly, we deduce that $(A^\prime(v_\ast)v)v_\ast = 0$ for all $v \in X_1$ and that 
$$
F_s^\prime(v_\ast)v = \begin{pmatrix}
- \ccor (n \times u) \\ - h_\ast \divergence(u) \\ - a_\ast \divergence(u)
\end{pmatrix}.
$$
The linearization $A_0$ hence becomes
$$
A_0 v = A(v_\ast)v - F_s^\prime(v_\ast)v = \begin{pmatrix}
\frac{1}{\rho_\text{ice}h_\ast}A_D^H(v_\ast)u + \frac{\partial_h P_\ast}{2 \rho_\text{ice}h_\ast}\nabla h + \frac{\partial_a P_\ast}{2 \rho_\text{ice}h_\ast}\nabla a  - \ccor (n\times u)\\ 
h_\ast \divergence(u) - d_h \Delta_N h \\ a_\ast \divergence(u) - d_a \Delta_N a
\end{pmatrix}.
$$

\begin{lemma}\label{lemma:7.1}
If $v_\ast$ is as above, then there exists $\delta_\ast >0$ such that $\sigma(A_0)\setminus \{0\} \subset \C_+$ holds for all $0<\delta < \delta_\ast$. Furthermore, $0$ is a semi-simple eigenvalue of $A_0$ and $N(A_0)$ has dimension $2$. 
\end{lemma}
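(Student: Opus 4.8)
The plan is to use that $A_0$ has compact resolvent and to run a weighted energy estimate at the equilibrium. The diagonal of $A_0$ consists of $\tfrac{1}{\rice h_\ast}(A^H_D(v_\ast)+\text{0th order})$ and the Neumann Laplacians $-d_h\Delta_N,-d_a\Delta_N$, all of which have compact resolvent by Proposition~\ref{prop:fa}, while the off-diagonal entries $\tfrac{\partial_h P_\ast}{2\rice h_\ast}\nabla$, $\tfrac{\partial_a P_\ast}{2\rice h_\ast}\nabla$ are first order and hence a relatively compact perturbation of the diagonal; thus $\sigma(A_0)$ is a discrete set of eigenvalues of finite algebraic multiplicity. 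By compactness of the resolvent and elliptic regularity this spectrum is independent of $q$ (as for $A^H_D$ in Proposition~\ref{prop:fa}), so I would fix $q=2$ and work in $L_2(\Omega;\R^2)\times L_2(\Omega)\times L_2(\Omega)$, where $\eps(u_\ast)=0$ forces $\triangle_\delta=\sqrt\delta$ and $(A^H_D(v_\ast)u)_i=-\tfrac{P_\ast}{2\sqrt\delta}\sum_{j,k,l}\mathbb{S}_{ij}^{kl}\partial_k\partial_l u_j$ is constant coefficient and strongly elliptic.

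Given an eigenpair $A_0v=\lambda v$ with $v=(u,h,a)\neq0$, I would test the momentum line against $\rice h_\ast\,\overline u$, the thickness line against $\tfrac{\partial_h P_\ast}{2h_\ast}\,\overline h$ and the compactness line against $\tfrac{\partial_a P_\ast}{2a_\ast}\,\overline a$, integrate over $\Omega$ and integrate by parts using $u=0$ on $\partial\Omega$ and the Neumann conditions. The pressure coupling $\tfrac{\partial_h P_\ast}{2}\io\nabla h\cdot\overline u=-\tfrac{\partial_h P_\ast}{2}\io h\,\overline{\divergence u}$ from the first identity is the complex conjugate of the divergence coupling $\tfrac{\partial_h P_\ast}{2}\io\divergence u\,\overline h$ from the second, so these cross terms cancel after taking real parts, and likewise for $a$; the Coriolis term is skew, $\Ret\io(n\times u)\cdot\overline u=0$. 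Adding the three identities and taking real parts yields
\[
\begin{aligned}
\Ret\,\lambda\,\Big(\rice h_\ast\|u\|_{L_2}^2+\tfrac{\partial_h P_\ast}{2h_\ast}\|h\|_{L_2}^2+\tfrac{\partial_a P_\ast}{2a_\ast}\|a\|_{L_2}^2\Big)
&=\Ret\io A^H_D(v_\ast)u\cdot\overline u \\
&\quad+\tfrac{\partial_h P_\ast d_h}{2h_\ast}\|\nabla h\|_{L_2}^2+\tfrac{\partial_a P_\ast d_a}{2a_\ast}\|\nabla a\|_{L_2}^2 .
\end{aligned}
\]
Since $\partial_h P_\ast=p^\ast e^{-c(1-a_\ast)}>0$ and $\partial_a P_\ast=cP_\ast>0$ the weights are positive, and the symbol bound from the proof of Proposition~\ref{prop:strongell} together with Plancherel (extending $u$ by zero) gives $\Ret\io A^H_D(v_\ast)u\cdot\overline u\ge\tfrac{P_\ast}{2\sqrt\delta\,e^2}\|\nabla u\|_{L_2}^2\ge0$. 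Hence $\Ret\,\lambda\ge0$, i.e.\ $\sigma(A_0)\subset\overline{\C_+}$.

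To upgrade to $\sigma(A_0)\setminus\{0\}\subset\C_+$ I would examine $\Ret\,\lambda=0$: then the right-hand side vanishes, coercivity forces $u=0$ and $\nabla h=\nabla a=0$, so $h,a$ are constant, and the thickness and compactness lines then read $\lambda h=\lambda a=0$, whence $\lambda=0$ since $v\neq0$. Taking $\lambda=0$ in the same computation identifies $N(A_0)=\{(0,c_1,c_2):c_1,c_2\in\C\}$, so $\dim N(A_0)=2$. For semi-simplicity I would show $N(A_0^2)=N(A_0)$: if $A_0^2v=0$ then $w:=A_0v\in N(A_0)$ has the form $(0,c_1,c_2)$, and integrating the thickness and compactness components of $w=A_0v$ over $\Omega$ annihilates the divergence terms ($\io\divergence u=\int_{\partial\Omega}u\cdot\nu=0$) and the Laplacians ($\io\Delta_N h=0$), leaving $c_1|\Omega|=c_2|\Omega|=0$; thus $w=0$ and $v\in N(A_0)$, so the ascent of $0$ is one.

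The main obstacle is the compactness line in the degenerate regime $a_\ast\to0$ allowed by $a_\ast\ge0$: the weight $\tfrac{\partial_a P_\ast}{2a_\ast}$ blows up and the exact cancellation is lost, so the coupling $\tfrac{\partial_a P_\ast}{2}\io\nabla a\cdot\overline u$ must instead be absorbed by Young's inequality into the coercive Hibler term. Because that term carries the coercivity constant $\tfrac{P_\ast}{2\sqrt\delta\,e^2}$, which tends to $+\infty$ as $\delta\to0$ while the coupling coefficients remain fixed, the absorption succeeds exactly for $\delta$ below a threshold $\delta_\ast$; this is where the hypothesis $0<\delta<\delta_\ast$ enters and is, I expect, the delicate quantitative point. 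For $a_\ast>0$ the cancellation already delivers the spectral bound for every $\delta>0$, so the genuine role of $\delta_\ast$ is to tame this degenerate coupling.
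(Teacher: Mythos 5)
Your argument is correct, but it takes a genuinely different route from the paper's. The paper tests the eigenvalue equation with the \emph{unweighted} vector $(u,h,a)$, so the cross terms do not cancel; instead they are rewritten as $\bigl(\tfrac{\partial_h P_\ast}{2\rho_{\text{ice}} h_\ast}-h_\ast\bigr)\io \nabla h\cdot u$ (and similarly for $a$), bounded below via Young's inequality by $-\tfrac{d_h}{2}\Vert\nabla h\Vert^2-\gamma_h\Vert u\Vert^2$, and the $\Vert u\Vert^2$ remainders are absorbed into the coercivity constant $C_\ast/\sqrt{\delta}$ of the Hibler term; this is precisely why the paper needs $\gamma_h+\gamma_a<C_\ast/\sqrt{\delta_\ast}$, i.e.\ $\delta<\delta_\ast$, for \emph{every} equilibrium. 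Your weighted test functions $\bigl(\rho_{\text{ice}}h_\ast\,\overline u,\ \tfrac{\partial_h P_\ast}{2h_\ast}\overline h,\ \tfrac{\partial_a P_\ast}{2a_\ast}\overline a\bigr)$ make the off-diagonal coupling skew-Hermitian, so the cross terms vanish exactly after taking real parts; this yields the spectral bound for all $\delta>0$ when $a_\ast>0$ and isolates the role of $\delta_\ast$ in the degenerate case $a_\ast=0$ (which the paper's setup explicitly allows, since only $a_\ast\ge 0$ is assumed), where you correctly fall back on the paper's absorption mechanism. Your version is sharper and exposes that the smallness of $\delta$ is an artifact of the unweighted estimate rather than intrinsic to the stability of $A_0$; the paper's version is more uniform (no case distinction) and reuses the same absorption machinery in Lemma~\ref{lemma:7.2}. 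You also supply an explicit ascent-one argument for semi-simplicity, namely $N(A_0^2)=N(A_0)$ by integrating the $h$- and $a$-components of $A_0v=w\in N(A_0)$ over $\Omega$ and using $\io\divergence u\,dx=\io\Delta_N h\,dx=0$, whereas the paper merely asserts this step; the only point you should anchor more firmly is the discreteness of $\sigma(A_0)$ (nonemptiness of the resolvent set follows from the sectoriality/maximal regularity of $A(v_\ast)$ as in Lemma~\ref{lemma:7.3} together with the compact embedding $X_1\hookrightarrow X_0$), but this is routine.
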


\begin{proof}
To locate the spectrum of $A_0$, we test the equation $(\lambda + A_0)v = 0$ by $v=(u,h,a)$ and use integration by parts, which leads to
\begin{align}\label{eq:testingeveq}
    0
    &= \lambda \Vert v \|_{L_2(\Omega)^4}^2 + \frac{1}{\rho_\text{ice}h_\ast}\int_\Omega A_D^H(v_\ast) u \cdot u \,d x + \frac{\partial_h P_\ast}{2 \rho_\text{ice}h_\ast} \int_\Omega \nabla h \cdot u \,d x + \frac{\partial_a P_\ast}{2 \rho_\text{ice}h_\ast} \int_\Omega \nabla a \cdot u \,d x\\
    & \quad + h_\ast \int_\Omega h \divergence(u) \,d x + d_h \Vert \nabla h \|_{L_2(\Omega)^2}^2  + a_\ast \int_\Omega a \divergence(u) \,d x + d_a \Vert \nabla a \|_{L_2(\Omega)^2}^2 \notag.
\end{align}
Thanks to the Dirichlet boundary condition for $u$ and employing \eqref{eq:ellest1} as well as Korn's and 
Poincar\'e's inequality  we deduce that
\begin{align}\label{eq:ahdinvertierbar}
\frac{1}{\rho_\text{ice}h_\ast}\int_\Omega A_D^H(v_\ast) u \cdot u \,d x &= 
- \frac{P_\ast}{2h_\ast \rice \sqrt{\delta}} \int_\Omega  \mathbb{S}_{ij}^{kl}\partial_k\partial_lu_ju_i  \, dx = 
\frac{P_\ast}{2h_\ast \rice \sqrt{\delta}} \int_\Omega  \mathbb{S}_{ij}^{kl} \partial_l u_j\partial_k u_i  \, dx \nonumber \\ 
& =
\frac{P_\ast}{2h_\ast \rice \sqrt{\delta}} \int_\Omega  \Delta^2(\nabla u)  \, dx  \geq 
\frac{P_\ast}{2h_\ast \rice \sqrt{\delta}}\frac{2}{e^2} \int_\Omega  |\eps(u)|^2  \, dx  \geq \frac{C_\ast}{\sqrt{\delta}} \Vert u \Vert_{H^1}^2 
\end{align}
for some constant $C_\ast > 0$ independent of $\delta$ and $u$. 
Now the remaining terms in \eqref{eq:testingeveq} can be absorbed:
First determine $\gamma_h,\gamma_a > 0$ depending in particular on $h_\ast, a_\ast, d_h$ and $d_a$ such that
\begin{align*}
    \frac{\partial_h P_\ast}{2 \rho_\text{ice}h_\ast} \int_\Omega \nabla h \cdot u \,d x +  h_\ast \int_\Omega h \divergence(u) \,d x
    = \Bigl(\frac{\partial_h P_\ast}{2 \rho_\text{ice}h_\ast} - h_\ast\Bigr) \int_\Omega \nabla h \cdot u \,d x
   \ge - \frac{d_h}{2} \Vert \nabla h \|_{L_2(\Omega)^2} - \gamma_h \Vert u \|_{L_2(\Omega)^2},
\end{align*}
and, similarly, such that 
\begin{align*}
    \frac{\partial_a P_\ast}{2 \rho_\text{ice}h_\ast} \int_\Omega \nabla a \cdot u \,d x +  a_\ast \int_\Omega a \divergence(u) \,d x
	    \ge- \frac{d_a}{2} \Vert \nabla a \|_{L_2(\Omega)^2} - \gamma_a \Vert u \|_{L_2(\Omega)^2}.
\end{align*}
Then choose $\delta_\ast > 0$ sufficiently small to ensure that 
$\gamma_h  + \gamma_a  < \frac{C_\ast}{\sqrt{\delta_*}}$.
In particular, this implies that for all $\delta < \delta_\ast$, there exists $C_\delta > 0$ such that
\begin{align}\label{eq:resulteveq}
    0 &\ge \lambda \Vert v \|_{L_2(\Omega)^4}^2 + C_\delta \Bigl(\Vert u \|_{H^1(\Omega)^2}^2 + \Vert \nabla h \|_{L_2(\Omega)^2} + \Vert \nabla a \|_{L_2(\Omega)^2}\Bigr).
\end{align}
The relation in \eqref{eq:resulteveq} can only hold provided that $\lambda$ is real and that $\lambda \le 0$. Hence,   $\sigma(A_0)\setminus \{0\} \subset \C_+$. 
For $\lambda = 0$, we infer that $u=0$ and $h$ as well as $a$ are constant. This implies that $0$ is a semi-simple eigenvalue of $A_0$ and that $N(A_0)$ has dimension $2$. 

\end{proof}

\begin{lemma}\label{lemma:7.2}
Near $v_\ast$, the set of equilibria $\cE$ is a $C^1$-manifold in $X_1$, and the tangent space of $\cE$ at $v_\ast$ is isomorphic to $N(A_0)$.
\end{lemma}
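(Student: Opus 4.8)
The plan is to realize the equilibrium set as the zero set of a single $C^1$-map and to apply the implicit function theorem, using the explicit family of constant equilibria to control the finite-dimensional bifurcation part. Concretely, I would define $G\colon V\cap X_1\to X_0$ by $G(v)=A(v)v-F_s(v)$, so that by the characterization recalled above, $\cE$ coincides near $v_\ast$ with $G^{-1}(0)$. The smoothness property \eqref{eq:smoothnessaf} guarantees $G\in C^1(V\cap X_1;X_0)$, and differentiating at $v_\ast$ reproduces exactly the linearization computed above, i.e.\ $G'(v_\ast)=A_0$.

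Next I would set up the spectral splitting induced by $A_0$. The operator $A_0$ differs from the generator studied in Lemma~\ref{lem:maxreg} and Proposition~\ref{prop:fa} only by the lower-order, bounded coupling terms $-\ccor(n\times u)$ and the divergence couplings $h_\ast\divergence u$, $a_\ast\divergence u$, hence it remains sectorial and, since the embedding $X_1\hookrightarrow X_0$ is compact, it has compact resolvent. Therefore $0$ is an isolated point of $\sigma(A_0)$ and the spectral projection $P_0$ onto $N(A_0)$ is well defined. By Lemma~\ref{lemma:7.1}, $0$ is semi-simple with $\dim N(A_0)=2$, so $X_0=N(A_0)\oplus R(A_0)$ and, correspondingly, $X_1=N(A_0)\oplus X_1^c$ with $X_1^c:=(I-P_0)X_1=R(A_0)\cap X_1$; moreover the restriction $A_0|_{X_1^c}\colon X_1^c\to R(A_0)$ is an isomorphism.

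The key observation is that $N(A_0)$ consists precisely of the constant states $(0,c_1,c_2)$ with $c_1,c_2\in\R$, which is exactly what the case $\lambda=0$ in the proof of Lemma~\ref{lemma:7.1} yields, and that for every such $\eta$ of small norm the shifted state $v_\ast+\eta=(0,h_\ast+c_1,a_\ast+c_2)$ is again a constant equilibrium, so $G(v_\ast+\eta)=0$ and $v_\ast+\eta\in V\cap X_1$. I would then apply the implicit function theorem to the map $(\eta,\psi)\mapsto (I-P_0)G(v_\ast+\eta+\psi)$ from a neighbourhood of $(0,0)$ in $N(A_0)\times X_1^c$ into $R(A_0)$: its partial derivative in $\psi$ at the origin is the isomorphism $A_0|_{X_1^c}$, which produces a $C^1$-function $\psi=\phi(\eta)$ with $\phi(0)=0$ solving the range equation. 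Because the explicit family already solves this equation with $\psi=0$, uniqueness in the implicit function theorem forces $\phi\equiv 0$ near $0$.

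Consequently, any equilibrium $v$ near $v_\ast$, written as $v=v_\ast+\eta+\psi$ with $\eta\in N(A_0)$ and $\psi\in X_1^c$, satisfies $(I-P_0)G(v)=0$ and hence $\psi=\phi(\eta)=0$; conversely each $v_\ast+\eta$ is an equilibrium. Thus, in a neighbourhood of $v_\ast$ in $X_1$, the set $\cE$ equals an open subset of the affine space $v_\ast+N(A_0)$, which is a $C^1$-manifold of dimension $2$ whose tangent space at $v_\ast$ is $N(A_0)$. The only genuinely delicate point is the interplay between the finite-dimensional kernel part and the explicit family: the implicit function theorem only resolves the range component $(I-P_0)G$, and it is the fact that the two-parameter family of constant equilibria fills out the full kernel dimension supplied by Lemma~\ref{lemma:7.1} that makes the bifurcation equation $P_0G=0$ automatically satisfied, leaving no room for additional, non-constant equilibria nearby.
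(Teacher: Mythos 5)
Your argument is correct, but it takes a genuinely different route from the paper. The paper proves the lemma by a direct energy estimate on the stationary nonlinear system: it tests the equilibrium equation with the weighted multiplier $(u, C_h h, C_a a)$ for the specific constants $C_h$, $C_a$ from \eqref{eq:ChCa}, uses the coercivity of $A_D^H$ together with Korn's and Poincar\'e's inequalities, and absorbs all coupling terms for $r$ small, concluding that \emph{every} equilibrium $v$ with $\Vert v - v_\ast\Vert_{X_{\gamma,\mu}} < r$ satisfies $u=0$ with $h$ and $a$ constant; the manifold statement is then immediate because the nearby equilibria are exactly the affine family $v_\ast + N(A_0)$. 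You instead run the standard Lyapunov--Schmidt/implicit function theorem argument from the Pr\"uss--Simonett--Zacher framework, exploiting that the explicit two-parameter family of constant states already exhausts the kernel $N(A_0)$, whose dimension and semi-simplicity Lemma \ref{lemma:7.1} supplies. Both routes are sound and both identify $\cE$ locally with $v_\ast + N(A_0)$. Your abstract route is conceptually shorter and reuses Lemma \ref{lemma:7.1} instead of redoing coercivity estimates, but you should make explicit the (routine) ingredients it silently requires: that $0$ is isolated in $\sigma(A_0)$ and that the semi-simple splitting $X_0 = N(A_0)\oplus R(A_0)$ is available in the $L_q$-setting (the testing computations in Lemma \ref{lemma:7.1} are $L_2$-based and are transferred via compact resolvent and elliptic regularity), and that the spectral projection $P_0$ restricts to a bounded projection on $X_1$ so that $X_1 = N(A_0)\oplus X_1^c$ topologically. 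What the paper's direct computation buys in exchange is a slightly stronger conclusion --- absence of non-constant equilibria in a full $X_{\gamma,\mu}$-ball rather than only an $X_1$-neighbourhood --- together with explicit, quantitative control of how small $r$ and $\delta$ must be.
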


\begin{proof}
Consider equilibria $v \in V \cap X_1$ such that $\Vert v - v_\ast \|_{X_{\gamma,\mu}} < r$ for given $r > 0$. 
The resulting equation for such $v$ is
$$
0= \begin{pmatrix}
A_D^H(v)u + \nabla \frac{P(v)}{2} + \rho_\text{ice} h \, u \cdot \nabla u - \ccor (n \times u) \\ \divergence(u h) - d_h \Delta_N h \\ \divergence(u a) - d_a \Delta_N a
\end{pmatrix}.
$$
We set the constants
\begin{equation}\label{eq:ChCa}
C_h := \frac{p^\ast \exp(-c(1-a_\ast))}{2 h_\ast} \mbox{ and } C_a := \begin{cases} \frac{c p^\ast h_\ast \exp(-c(1-a_\ast))}{2 a_\ast}, & \text{if } a_\ast >0, \\
1, & \text{if } a_\ast = 0,  
\end{cases}
\end{equation}
and test the above equation with $(u, C_h h, C_a a)$ to obtain
\begin{align}\label{eq:testingcloseequil}
    0
    &= \int_\Omega A_D^H(v)u \cdot u \,d x  + \int_\Omega  \frac{\nabla P(v)}{2} \cdot u \,d x + \int_\Omega \rho_\text{ice} h (u \cdot \nabla u) \cdot u \,d x + C_h \int_\Omega \divergence(u h) h \,d x\\ 
    &\quad + d_h C_h \Vert \nabla h\Vert_{L_2(\Omega)^2}^2 + C_a \int_\Omega \divergence(u a) a \,d x + d_a C_a \Vert \nabla a \Vert_{L_2(\Omega)^2}^2 \notag.
\end{align}
Using the symmetry of $\mathbb{S}$, the estimate $P(v)\ge P_\ast \kappa \exp(-c(1-\alpha))$, the estimate $\triangle^2_\delta(\eps) \le C_e r$ and Korn's and Poincare's inequalities, the first term on the right-hand-side satisfies
\begin{align*}
\int_\Omega A_D^H(v)u \cdot u \,d x & = - \int_\Omega \mathrm{div} \Bigl( \frac{P}{2} \frac{\mathbb{S}\eps}{\triangle_\delta(\eps)} \Bigr) \cdot u \, dx  = \int_\Omega  \frac{P}{2} \frac{\eps^T \mathbb{S} \eps}{\triangle_\delta(\eps)} \, dx 
\ge \frac{C_V}{\sqrt{\delta + C_e r}}
 \Vert u \Vert_{H_1(\Omega)}^2
\end{align*}
for some constant $C_V >0$ independent of $\delta,r$ and $v$. 
We show how terms without sign in \eqref{eq:testingcloseequil} can now be absorbed.  We first discuss the case $a_\ast \neq 0$ and remark on the case $a_\ast = 0$ below. 
First note that using $\Vert v - v_\ast \|_{X_{\gamma,\mu}} < r$ and any bound on $r>0$,
$$
\int_\Omega \rho_\text{ice} h (u \cdot \nabla u) \cdot u \,d x \leq C_\ast r \Vert u \|_{H^1(\Omega)},
$$
for a suitable constant $C_\ast > 0$ that is independent of $\delta, r$ and $v$. 
Secondly, we calculate 
\begin{align}\label{eq:absorb1}
\int_\Omega  \frac{\nabla P(v)}{2}\cdot u \,d x & = \int_\Omega \Bigl(\frac{\partial_h P}{2} - C_h h\Bigr) \nabla h \cdot u  \,d x + C_h \int_\Omega h \nabla h \cdot u \,d x \\ \notag 
&+ \int_\Omega \Bigl(\frac{\partial_a P}{2} - C_a a\Bigr) \nabla a \cdot u \,d x + C_a \int_\Omega a \nabla a \cdot u \,d x
\end{align}
and use that part of this expression cancels with the terms
$$
C_h \int_\Omega \divergence(u h)h \,d x = - C_h \int_\Omega h (\nabla h \cdot u) \,d x \mbox{ as well as } C_a \int_\Omega \divergence(u a)a \,d x = - C_a \int_\Omega a (\nabla a \cdot u) \,d x
$$
in \eqref{eq:testingcloseequil}. It remains to check that due to the particular choice of $C_h,C_a$ in \eqref{eq:ChCa}, we find that for a (possibly increased) constant $C_\ast > 0$, 
$$
\Vert \frac{\partial_h P}{2} - C_h h \Vert_\infty \leq \frac{p^\ast}{2} \Bigl( \Vert \exp(ca) - \exp(ca_\ast) \|_\infty + h_\ast \exp(-c(1-a_\ast)) \Vert h - h_\ast \|_\infty\Bigr) \leq C_\ast r
$$
and similarly 
$$ \Vert \frac{\partial_a P}{2} - C_a a \Vert_\infty \leq C_\ast r,$$
and hence the terms 
$$
\int_\Omega \Bigl(\frac{\partial_h P}{2} - C_h h\Bigr) \nabla h \cdot u  \,d x \geq - 
C_\ast r\Bigl(\Vert \nabla h \|_{L_2(\Omega)^2}^2 + \Vert u \|_{L_2(\Omega)^2}^2\Bigr)$$ 
and 
$$\int_\Omega \Bigl(\frac{\partial_a P}{2} - C_a a\Bigr) \nabla a \cdot u \,d x
\geq - C_\ast r\Bigl(\Vert \nabla a \|_{L_2(\Omega)^2}^2 + \Vert u \|_{L_2(\Omega)^2}^2\Bigr)$$
are controlled. 


In summary, inserting the above estimates into equation  \eqref{eq:testingcloseequil}, we conclude that
\begin{align*}
 0 \ge \Bigl(\frac{C_V}{\sqrt{\delta + C_e r}} -  C_\ast r\Bigr)\Vert u \|_{H^1(\Omega)^2}^2 + \Bigl(d_h C_h - C_\ast r\Bigr)\Vert \nabla h\|_{L_2(\Omega)^2}^2
    + \Bigl(d_a C_a - C_\ast r\Bigr)\Vert \nabla a\|_{L_2(\Omega)^2}^2.
\end{align*}
Hence, if  $r > 0$ is sufficiently small, then \eqref{eq:testingcloseequil} implies
\begin{align}\label{eq:resulttestingcloseequil}
    0
    &\ge \Vert u \|_{H^1(\Omega)^2}^2 + \Vert \nabla h\|_{L_2(\Omega)^2}^2 + \Vert \nabla a\|_{L_2(\Omega)^2}^2.
\end{align}
This shows that for $v=(u,h,a) \in V_\ast$ with $\Vert v - v_\ast \|_{X_{\gamma,\mu}} < r$, we have  $u=0$ and $h$ as well as $a$ must be constant. 
 In particular, $\cE = N(A_0)$ is valid in a neighborhood of $v_\ast$.\\
The case $a_\ast = 0$ can be included by a slight adjustment of the argument. Replace \eqref{eq:absorb1} by
\begin{align*}
\int_\Omega  \frac{\nabla P(v)}{2}\cdot u \,d x = \int_\Omega \Bigl(\frac{\partial_h P}{2} - C_h h\Bigr) \nabla h \cdot u  \,d x + C_h \int_\Omega h \nabla h \cdot u \,d x + \int_\Omega \frac{\partial_a P}{2}  \nabla a \cdot u \,d x,
\end{align*}
and directly estimate
$$ 
 \int_\Omega \frac{\partial_a P}{2}  \nabla a \cdot u \,d x  \geq - C_* \Vert a \Vert_\infty \int_\Omega \nabla a \cdot u \,dx \geq - C_* r \Bigl(\Vert \nabla a \|_{L_2(\Omega)^2}^2 + \Vert u \|_{L_2(\Omega)^2}^2\Bigr)
$$
as well as 
$$
\int_\Omega \divergence(u a)a \,d x = -  \int_\Omega a (\nabla a \cdot u) \,d x \geq - C_* r \Bigl(\Vert \nabla a \|_{L_2(\Omega)^2}^2 + \Vert u \|_{L_2(\Omega)^2}^2\Bigr),
$$
to conclude as before.  
\end{proof}

\begin{lemma}\label{lemma:7.3}
For $v_\ast$ as above, $A(v_\ast)$  has the property of maximal $L_s$-regularity on $L_r(\Omega;\R^2)$.
\end{lemma}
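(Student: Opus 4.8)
The plan is to follow the strategy of Lemma~\ref{lem:maxreg} and to deduce maximal regularity of the full matrix operator $A(v_\ast)$ from that of its diagonal entries by exploiting the upper triangular structure, the only difference being that here no global shift $\omega$ is built into the operator. Evaluating \eqref{eq:A(v)} at $v_\ast=(0,h_\ast,a_\ast)$, the diagonal consists of $D_1:=\frac{1}{\rice h_\ast}A^H_D(v_\ast)$ and the scaled Neumann Laplacians $D_2:=-d_h\Delta_N$, $D_3:=-d_a\Delta_N$, while the first row carries the two off-diagonal couplings $\frac{\partial_h P_\ast}{2\rice h_\ast}\nabla$ and $\frac{\partial_a P_\ast}{2\rice h_\ast}\nabla$; since $h_\ast$ and $a_\ast$ are constant in space and time, all of these coefficients are constants.

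First I would settle the diagonal. As $h_\ast\geq\kappa>0$ is a positive constant, $\frac{1}{\rice h_\ast}$ is merely a positive scalar factor, so Theorem~\ref{thm:mr}, applied with $v_0=v_\ast$, shows that $A^H_D(v_\ast)$ has the maximal $L_s$-regularity property on $L_r(\Omega;\R^2)$ and even admits a bounded $H^\infty$-calculus; moreover the coercivity computation \eqref{eq:ahdinvertierbar}, which rests on strong ellipticity, Korn's and Poincar\'e's inequalities and the Dirichlet condition, shows that $A^H_D(v_\ast)$ is invertible, so that $D_1$ needs no shift. The operators $D_2$ and $D_3$ are, up to the positive constants $d_h$ and $d_a$, the Neumann Laplacian $-\Delta_N$, which is $\mathcal{R}$-sectorial and possesses a bounded $H^\infty$-calculus on $L_r(\Omega)$, hence enjoys maximal $L_s$-regularity on bounded time intervals.

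It then remains to absorb the off-diagonal coupling. The operators $\frac{\partial_h P_\ast}{2\rice h_\ast}\nabla$ and $\frac{\partial_a P_\ast}{2\rice h_\ast}\nabla$ map $D(\Delta_N)=\{h\in H^2_r(\Omega):\partial_\nu h=0\text{ on }\partial\Omega\}$ boundedly into $L_r(\Omega;\R^2)$ and, being of first order, are of strictly lower order than the second order diagonal part; by interpolation they are relatively bounded with relative bound zero with respect to the diagonal. Hence the perturbation theorem for upper triangular operator matrices with lower order couplings, exactly as invoked in Lemma~\ref{lem:maxreg} and in \cite{DHP03,PS16}, transfers the maximal regularity from the diagonal to $A(v_\ast)$ on the product ground space $X_0$. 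The point requiring the most care is precisely this triangular step: one has to verify that the $\nabla$-couplings are genuinely subordinate to the diagonal, so that they do not destroy $\mathcal{R}$-sectoriality, and to keep in mind that, unlike $D_1$, the blocks $D_2,D_3$ are not invertible because constants lie in the kernel of $\Delta_N$. Consequently the statement is to be read on finite time intervals (equivalently as $\mathcal{R}$-sectoriality), which is exactly what the generalized principle of linearized stability used in the proof of Theorem~\ref{thm:global} requires, the genuine long-time decay being governed there by the spectral splitting established in Lemma~\ref{lemma:7.1}.
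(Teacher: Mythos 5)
Your proposal is correct and follows essentially the same route as the paper: both rest on Theorem~\ref{thm:mr} for the Hibler block, the coercivity estimate \eqref{eq:ahdinvertierbar} to dispense with the shift $\omega$, and the upper triangular structure with lower-order $\nabla$-couplings to pass to the full matrix $A(v_\ast)$. The only divergence is one of emphasis: the paper removes the shift by showing $s(-A^H_{D,2}(v_\ast))<0$ and invoking compactness of the resolvent to get $r$-independence of the spectrum, whereas you read the conclusion on finite time intervals (where shifts are irrelevant) and correctly flag that the Neumann blocks are not invertible -- which is all the linearized-stability argument actually uses.
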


\begin{proof}
We already know from Theorem \ref{thm:mr} that there exists $\omega_0 \in \R$ such that $A^H_D(v_\ast) + \omega$ has the maximal $L_s$-regularity on $L_r(\Omega;\R^2)$ for all $\omega > \omega_0$.
Considering the eigenvalue equation for $A^H_D(v_\ast)$ it follows by \eqref{eq:ahdinvertierbar} that 
$$ 
0 = \lambda \Vert u \Vert^2 + \frac{1}{\rice h_\ast} \int_\Omega A^H_D(v_\ast)u \cdot u \, dx \geq  \lambda \Vert u \Vert^2 + C \Vert u \Vert_{H^1}^2.
$$
Thus $s(-A^H_{D,2}(v_\ast)) < 0$ and $A^H_{D,2}(v_\ast)$ is invertible in $L_2(\Omega)^2$.  Due to compact embeddings, $A^H_D(v_\ast)$ has compact resolvent and hence the spectrum of $A^H_D(v_0)$ is 
$r$-independent and we see that $s(-A^H_{D,2}(v_\ast))= s(-A^H_D(v_\ast))<0$. It can be shown that $\omega_0$ can be chosen to be equal to the spectral 
bound $s(-A^H_D(v_\ast))$ of $A^H_D(v_\ast)$, i.e., $\omega_0=s(-A^H_D(v_\ast))$, which implies that $A^H_D(v_\ast)$ has the maximal $L_s$-regularity on $L_r(\Omega;\R^2)$. 
The triangular structure of $A(v_\ast)$ implies that this the latter property holds also for $A(v_\ast)$.  
\end{proof}

Summarizing we see that Lemmas \ref{lemma:7.1}, \ref{lemma:7.2} and \ref{lemma:7.3} imply that the assumptions of the principle of  linearized stability described as in 
\cite{PSZ09} or \cite{PS16} are fulfilled.  The assertion of Theorem \ref{thm:global} follows thus by this principle.

\end{document}